\renewcommand{\bar}{\overline}
\newcommand{\AAA}{\mathbb{A}}
\newcommand{\CC}{\mathbb{C}}
\newcommand{\PP}{\mathbb{P}}
\newcommand{\QQ}{\mathbb{Q}}
\newcommand{\RR}{\mathbb{R}}
\newcommand{\ZZ}{\mathbb{Z}}
\newcommand{\Qp}{\QQ_p}
\newcommand{\Cv}{\CC_v}
\newcommand{\calM}{{\mathcal M}}
\newcommand{\calP}{{\mathcal P}}
\newcommand{\kbar}{\overline{k}}
\newcommand{\PC}{\PP^1(\CC)}
\newcommand{\PCv}{\PP^1(\Cv)}
\newcommand{\Pkbar}{\PP^1(\overline{k})}
\newcommand{\Ber}{\textup{an}}
\newcommand{\PBerkv}{\PP^1_{\Ber,v}}
\DeclareMathOperator{\Rat}{Rat}
\DeclareMathOperator{\PGL}{PGL}
\DeclareMathOperator{\Gal}{Gal}
\newcommand{\Dbar}{\bar{D}}
\theoremstyle{plain}
\newtheorem{thm}{Theorem}[section]
\newtheorem{lemma}[thm]{Lemma}
\newtheorem{conj}[thm]{Conjecture}
\theoremstyle{definition}
\newtheorem{defin}[thm]{Definition}
\newtheorem{remark}[thm]{Remark}
\theoremstyle{remark}
\numberwithin{equation}{section}
\title[PCF unicritical polynomials]
{A finiteness property of postcritically finite unicritical polynomials}
\date{October 29, 2020}
\subjclass[2010]{11R04, 37P15, 37P30}
\keywords{bifurcation measure, integrality, Mandelbrot set}
\author{Robert~L. Benedetto}
\address[Benedetto]{Amherst College \\ Amherst, MA 01002}
\email{rlbenedetto@amherst.edu}
\author{Su-Ion Ih}
\address[Ih]{University of Colorado \\ Boulder, CO 80309 
and Korea Institute for Advanced Study, Seoul 02455}
\email{ih@math.colorado.edu}
\begin{document}

\begin{abstract}
Let $k$ be a number field with algebraic closure $\overline k$,
and let $S$ be a finite set of places of $k$
containing all the archimedean ones.
Fix $d\geq 2$ and $\alpha \in \overline k$ such that the map
$z\mapsto z^d+\alpha$ is not postcritically finite.
Assuming a technical hypothesis on $\alpha$, we prove that
there are only finitely many parameters $c\in\overline{k}$ for which
$z\mapsto z^d+c$ is postcritically finite
and for which $c$ is $S$-integral relative to $(\alpha)$.
That is,  in the moduli space of unicritical polynomials of degree $d$,
there are only finitely many PCF $\kbar$-rational points that are $((\alpha),S)$-integral.
We conjecture that the same statement is true without the technical hypothesis.
\end{abstract}



\maketitle


\hfill \emph{In memory of Lucien Szpiro}
\hfill
{}


\section{Introduction} \label{sec:intro}
Let $k$ be a field with algebraic closure $\kbar$, and let $f\in k(z)$ 
be a rational function defined over $k$.
A point $x\in\Pkbar$ is \emph{preperiodic} if $f^n(x)=f^m(x)$ for some 
integers $n>m\geq 0$,
where $f^n:=f\circ \cdots\circ f$ denotes the $n$-fold composition of $f$ with itself,
with $f^{0}:=\textup{id}$.
The map $f$ is said to be \emph{postcritically finite}, or PCF, if all of its critical points in $\Pkbar$
are preperiodic under the iteration of $f$.
In both complex and arithmetic dynamics,
PCF maps have proven themselves to be objects of particular interest
for their special dynamical and arithmetic properties.
See, for example,
\cite{AHM,ABEGKM,BD;13,BFHJY,BIJL,Buff;18,Eps;12,
FG;18,FavGau;arxiv,GKNY;17,HT;15,Jon;13,Koch;13}.
In particular, in an algebraic moduli space of discrete dynamical systems,
the points corresponding to PCF maps appear to play a similar role
as other special points, such as CM points on classical modular curves.
In this paper, for $d\geq 2$ an integer,
we consider PCF parameters in the one-parameter family
of unicritical polynomials $f_{d,c}(z):= z^d+c$, and we prove a finiteness
result concerning integrality of such parameters with respect to a given
non-PCF parameter.

The polynomial $f_{d,c}$ has critical points at $z=0,\infty$.
Since $\infty$ is fixed, it follows that
$f_{d,c}$ is PCF if and only if the forward orbit 
\[ \{ f_{d,c}^n(0) : n\geq 0 \} \]
of the critical point $z=0$ is a finite set.
Any such PCF parameter $c$ must lie in $\overline{\QQ}$,
since such $c$ is a root of the polynomial $f_{d,c}^n(0)-f_{d,c}^m(0)$
for some integers $n>m\geq 0$.
(In fact, such $c$ must be an algebraic integer, since this polynomial
is monic with integer coefficients.)
Moreover, by \cite[Theorem~1.1]{BIJL}, the PCF parameters form
a set of bounded arithmetic height. In particular, for any number field $k$,
there are only finitely many $c\in k$ for which $f_{d,c}$ is PCF.
For example, for $d=2$ and $k=\QQ$, the only 
PCF parameters are $c=0,-1,-2$. That is, for 
$c\in\QQ$, the map $z\mapsto z^2+c$
is PCF if and only if $c\in\{0,-1,-2\}$.

We set the following notation throughout this paper.

\vspace{0.2cm}

\begin{tabbing}
\hspace{6mm} \= \hspace{15mm} \=  \kill
\> $k$ \> a number field, with algebraic closure $\kbar$ \\
\> $M_k$ \> the standard set of places of $k$ \\
\> $S$ \> a finite subset of $M_k$, including all the archimedean places \\
\> $k_v$ \> the completion of $k$ at a place $v\in M_k$, with absolute value $|\cdot|_v$ \\
\> $\Cv$ \> the completion of an algebraic closure of $k_v$, with absolute value $|\cdot|_v$ \\
\> $f_{d,c}$ \> the polynomial $f_{d,c}(z)=z^d+c$, where $d\geq 2$ is an integer.
\end{tabbing}

\vspace{0.2cm}

%
%

If $L_1$ and $L_2$ are two fields that contain $k$,
we say that a field homomorphism
$\sigma: L_1\to L_2$ is a \emph{$k$-embedding} if it is the identity on $k$.
Such a $k$-embedding $\sigma$ extends to a map from $\PP^1(L_1)$ to $\PP^1(L_2)$
by setting $\sigma(\infty):=\infty$.
If $D$ is an effective divisor on $\PP^1$ defined over $\kbar$,
recall that a point $x\in\Pkbar$ is \emph{$S$-integral} (on $\PP^1$)
relative to $D$, or that $x$ is \emph{$(D,S)$-integral}, if for any place
$v\in M_k$ with $v\not\in S$, for any point $\alpha$ in the support of $D$,
and for any $k$-embeddings $\sigma:\kbar \hookrightarrow \Cv$
and $\tau:\kbar \hookrightarrow \Cv$, the points $\sigma(x)$
and $\tau(\alpha)$ lie in different residue classes of $\PCv$.
In particular, if $x,\alpha\in\kbar$, we have
\[ \begin{cases}
|\sigma(x)-\tau(\alpha)|_v \geq 1 & \text{ if }|\tau(\alpha)|_v \leq 1; \; \;  \text{and}
\\
|\sigma(x)|_v \leq 1 & \text{ if } |\tau(\alpha)|_v > 1.
\end{cases} \]
The above definition is, of course, only a special case --- for $\PP^1$ 
--- of a more general notion of integrality of a point relative to an effective divisor 
on a variety over $k$; see, for example, \cite{GraIh;13}.


For each integer $d\geq 2$, and for each place $v$ of $k$, the family $f_{d,c}(z)=z^d+c$
has an associated $v$-adic \emph{generalized Mandelbrot set} $\mathbf{M}_{d,v}$,
or \emph{multibrot set}, defined by
\begin{equation}
\label{eq:mandel}
\mathbf{M}_{d,v} := \{ c\in\Cv \, : \, \text{the orbit }\{ f_{d,c}^n(0) : n\geq 0\}
\text{ is bounded} \}.
\end{equation}
If $c\in\Cv$ is a PCF parameter for $f_{d,c}$, then clearly $c\in\mathbf{M}_{d,v}$.
If $v$ is an archimedean place, so that $\Cv\cong\CC$, then
$\mathbf{M}_{d,v}$ is the set of parameters $c\in\CC$ for which
the Julia set of $f_{d,c}$ is connected.
It is easy to check that $\mathbf{M}_{d,v}$ is compact for archimedean $v$.

Our main result is as follows.

\begin{thm}
\label{thm:pcffin}
Let $k$ be a number field with algebraic closure $\kbar$,
let $S\subseteq M_k$ be a finite set of places of $k$
including all the archimedean places, let $d\geq 2$ be an integer,
and for any $c\in\kbar$, let $f_{d,c}(z):=z^d+c$.
Let $\alpha\in \kbar$, and suppose that
\begin{itemize}
\item $f_{d,\alpha}$ is not PCF, and
\item for every archimedean place $v$ of $k$,
and for every $k$-embedding $\tau$ of $k(\alpha)$ into $\Cv$,
the image $\tau(\alpha)$ does not lie in the boundary $\partial\mathbf{M}_{d,v}$
of the multibrot set $\mathbf{M}_{d,v}$ of equation~\eqref{eq:mandel}.
\end{itemize}
Then there are only finitely many parameters $c\in\kbar$
that are $S$-integral relative to $(\alpha)$,
and for which $f_{d,c}$ is PCF.
\end{thm}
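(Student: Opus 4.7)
The plan is a proof by contradiction via arithmetic equidistribution of small points for the bifurcation height on the $c$-moduli line of unicritical polynomials, balanced against the product formula and the $v$-adic integrality condition. Assume there are infinitely many pairwise distinct $((\alpha),S)$-integral PCF parameters $c_n\in\overline k$. PCF parameters are algebraic integers of vanishing bifurcation height with respect to the adelic canonical height
\[
\hat h_{\mathrm{bif}}(c)\;:=\;\sum_{v\in M_k}\frac{[k_v:\QQ_v]}{[k:\QQ]}\,G_{d,v}(c),\qquad
G_{d,v}(c)\;:=\;\lim_{m\to\infty}d^{-m}\log^+|f_{d,c}^m(0)|_v,
\]
so $(c_n)$ is a small sequence for $\hat h_{\mathrm{bif}}$. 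By Yuan's arithmetic equidistribution theorem (in its one-variable form going back to Baker--Rumely, Chambert-Loir, and Favre--Rivera-Letelier) applied to the adelic metrization underlying $\hat h_{\mathrm{bif}}$, the Galois-orbit averages
\[
\mu_n^{(v)}\;:=\;\frac{1}{[k(c_n):k]}\sum_{\sigma:k(c_n)\hookrightarrow\Cv}\delta_{\sigma(c_n)}
\]
converge weakly on each Berkovich line $\PP^{1,\mathrm{an}}_{\Cv}$ to the bifurcation measure $\mu_{d,v}:=\Delta G_{d,v}$; the archimedean support of $\mu_{d,v}$ is $\partial\mathbf M_{d,v}$, while at every non-archimedean place of good reduction for the family the support is the Gauss point.

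The next step is to pair the equidistribution with the test function $\phi_{v,\tau}(T):=\log|T-\tau(\alpha)|_v$ (extended to the Berkovich line via the Hsia kernel) for each $k$-embedding $\tau$ of $k(\alpha)$ into $\Cv$. Enlarge $S$ to include every place of bad reduction for $d$ and for $\alpha$. Then at every non-archimedean $v\notin S$ the support of $\mu_{d,v}$ is the Gauss point, while $\tau(\alpha)$ is a Type~I point, so $\phi_{v,\tau}$ is continuous on $\operatorname{supp}(\mu_{d,v})$; and the archimedean hypothesis $\tau(\alpha)\notin\partial\mathbf M_{d,v}$ is exactly $\tau(\alpha)\notin\operatorname{supp}(\mu_{d,v})$ at arch places, giving the same continuity there. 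Hence at every place $v\in M_k$, weak convergence yields
\[
\frac{1}{[k(c_n):k]}\sum_\sigma \log|\sigma(c_n)-\tau(\alpha)|_v \;\xrightarrow{\;n\to\infty\;}\;\int_{\PP^{1,\mathrm{an}}_{\Cv}} \log|T-\tau(\alpha)|_v\,d\mu_{d,v}(T).
\]
In fact at each non-archimedean $v\notin S$ the algebraic-integer property of $c_n$ together with the integrality hypothesis forces $|\sigma(c_n)-\tau(\alpha)|_v=\max(1,|\tau(\alpha)|_v)$, which equals $|T-\tau(\alpha)|_v$ evaluated at the Gauss point, so the left-hand side equals the right-hand side exactly for every $n$.

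Summing over $\sigma$ and $\tau$ and over $v$ with adelic weights, and invoking the product formula $\sum_v \frac{[k_v:\QQ_v]}{[k:\QQ]}\sum_{\sigma,\tau}\log|\sigma(c_n)-\tau(\alpha)|_v=0$ (applied to the resultant of the minimal polynomials of $c_n$ and $\alpha$ over $k$), one obtains, upon passing to $n\to\infty$,
\[
\sum_{v\in M_k}\frac{[k_v:\QQ_v]}{[k(\alpha):k][k:\QQ]}\sum_\tau \int_{\PP^{1,\mathrm{an}}_{\Cv}}\log|T-\tau(\alpha)|_v\,d\mu_{d,v}(T)\;=\;0.
\]
A standard potential-theoretic identification now says that the left-hand side equals $\hat h_{\mathrm{bif}}(\alpha)$ (the local integrals recover $G_{d,v}(\tau(\alpha))$ up to additive constants that cancel globally), while the non-PCF hypothesis on $\alpha$ forces $\hat h_{\mathrm{bif}}(\alpha)>0$, a contradiction. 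The main obstacle is the limit passage at the places $v\in S$: weak convergence of $\mu_n^{(v)}$ to $\mu_{d,v}$ yields a limit for $\phi_{v,\tau}$ only when $\tau(\alpha)$ lies outside the support of $\mu_{d,v}$, and the archimedean boundary hypothesis in the theorem is placed precisely to secure this at arch places; removing it is the conjectural case. A secondary technicality is verifying the adelic semipositivity conditions required by Yuan's theorem for the bifurcation metric in this unicritical family, which follow from explicit formulas for $G_{d,v}$ together with the good-reduction description of $\mu_{d,v}$ used above.
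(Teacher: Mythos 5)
Your overall strategy is the same one the paper uses: pair equidistribution of the PCF Galois orbits against the kernel $\log|T-\alpha|_v$, apply the $((\alpha),S)$-integrality to kill the terms outside $S$, sum via the product formula, and contradict the strict positivity of $\hat h_{\mathrm{bif}}(\alpha)=\hat h_{d,\alpha}(\alpha)$. The paper also uses Lemma~\ref{lem:locht} for the identification $\int\log|T-\alpha|_v\,d\mu_{d,v}=G_{d,v}(\alpha)$ exactly (no additive-constant hedging needed), and works with $\alpha\in k$ after a base extension rather than carrying the extra sum over $\tau$.

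The gap is in the limit passage for $v\in S$. You claim that because $\tau(\alpha)\notin\operatorname{supp}(\mu_{d,v})$, weak convergence $\nu_n\to\mu_{d,v}$ gives
\[
\frac{1}{[k(c_n):k]}\sum_{\sigma}\log|\sigma(c_n)-\tau(\alpha)|_v\longrightarrow\int\log|T-\tau(\alpha)|_v\,d\mu_{d,v},
\]
but this implication is false in general: $\log|T-\tau(\alpha)|_v$ is unbounded below near $\tau(\alpha)$, and weak convergence (which is tested against \emph{bounded} continuous functions) does not control mass of the $\nu_n$ drifting toward $\tau(\alpha)$. One must rule out PCF parameters accumulating at $\tau(\alpha)$. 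The paper does this in two separate ways in its Theorem~\ref{thm:logequi}: for archimedean $v$, it uses the hyperbolic-component argument (a periodic-critical-orbit, i.e.\ Gleason, parameter is isolated among PCF parameters, and a Misiurewicz one lies on $\partial\mathbf{M}_{d,v}$, which $\alpha$ avoids by hypothesis) to find a punctured neighborhood of $\alpha$ containing no PCF parameters, and then truncates the kernel to a genuinely continuous compactly supported function that agrees with $\log|x-\alpha|_v$ on all the $x_n^\sigma$; for non-archimedean $v$, it invokes Benedetto--Ih \cite[Theorem~1.4]{BI;20;1}, which says there are only finitely many PCF parameters within $v$-adic distance $<1$ of $\alpha$. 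Your proposal has neither ingredient. In particular you never address non-archimedean $v\in S$ at all: your exact evaluation $|\sigma(c_n)-\tau(\alpha)|_v=\max(1,|\tau(\alpha)|_v)$ uses the integrality hypothesis, which only holds for $v\notin S$, so for finite places in $S$ your argument is silent and the naive weak-convergence claim is not valid. You flag the limit passage at $v\in S$ as the ``main obstacle,'' but the hypothesis $\tau(\alpha)\notin\partial\mathbf{M}_{d,v}$ does not by itself ``secure'' it -- it must be combined with the non-accumulation of PCF parameters near $\alpha$, which is a separate dynamical input at every place of $S$.
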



The hypothesis that  $\alpha\not\in\partial\mathbf{M}_{d.v}$ for 
archimedean $v$ is reminiscent of a 
similar condition called ``totally Fatou" in the context of \cite{Pet;08}.
We conjecture that this condition should not be required, as follows.

\begin{conj}\label{conj:pcffin}
Let $k$ be a number field with algebraic closure $\kbar$,
let $S\subseteq M_k$ be a finite set of places of $k$
including all the archimedean places, let $d\geq 2$ be an integer,
and for any $c\in\kbar$, let $f_{d,c}(z):=z^d+c$.

Let $\alpha\in \kbar$, and suppose that $f_{d,\alpha}$ is not PCF.
Then there are only finitely many parameters $c\in\kbar$
that are $S$-integral relative to $(\alpha)$,
and for which $f_{d,c}$ is PCF.
\end{conj}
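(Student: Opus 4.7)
The plan is to strengthen the arithmetic equidistribution argument behind Theorem~\ref{thm:pcffin} so that it no longer requires $\tau(\alpha)\notin\partial\mathbf{M}_{d,v}$ at archimedean places. Assume, for contradiction, that there exist infinitely many distinct PCF parameters $c_1,c_2,\ldots\in\kbar$ each of which is $(\alpha,S)$-integral. Enlarge $k$ so that $\alpha\in k$. By \cite{BIJL}, each $c_n$ has bounded naive height, and it has vanishing canonical bifurcation height because it is PCF. An arithmetic equidistribution theorem (Yuan's theorem, or its adaptation to the parameter space of unicritical polynomials by Baker--Rumely and Favre--Rivera-Letelier) then shows that at every place $v\in M_k$ the Galois orbits of the $c_n$ equidistribute weakly to the bifurcation measure $\mu_{d,v}$ on $\PBerkv$.

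The next step is a product-formula identity. For each $n$, applying the product formula to the nonzero algebraic number $c_n-\alpha$ yields
\[ \sum_{v\in M_k}\frac{1}{[k(c_n):k]} \sum_{\sigma:k(c_n)\hookrightarrow\Cv}\log|\sigma(c_n)-\alpha|_v \;=\; 0. \]
At each non-archimedean $v\notin S$, the integrality hypothesis together with the fact that PCF parameters are algebraic integers pins the local term down to $\log^+|\alpha|_v$, independent of $n$. At each $v\in S$, the hope is to pass to the limit by equidistribution, obtaining $\int \log|z-\alpha|_v\,d\mu_{d,v}(z)$. Summing over all $v$ and identifying $\int\log|z-\alpha|_v\,d\mu_{d,v}(z)-\log^+|\alpha|_v$ with the local bifurcation escape rate $G_{d,v}(\alpha):=\lim_{n\to\infty} d^{-n}\log^+|f_{d,\alpha}^n(0)|_v$ would produce, in the limit, $\sum_v G_{d,v}(\alpha)=0$. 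This global quantity is the canonical bifurcation height of $\alpha$, which is strictly positive since $f_{d,\alpha}$ is not PCF, yielding the desired contradiction.

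The main obstacle lies in the passage to the limit at an archimedean $v$ when $\tau(\alpha)\in\partial\mathbf{M}_{d,v}$, because the test function $z\mapsto\log|z-\tau(\alpha)|_v$ has a logarithmic singularity inside the support of $\mu_{d,v}$ and weak-$*$ convergence of measures does not directly yield convergence of such integrals. The proposed remedy is to exploit two known features of the archimedean bifurcation measure: its potential $G_{d,v}$ is continuous on $\CC$ (indeed H\"older, by classical work of Douady--Hubbard and Sibony adapted to unicritical families), and the empirical probability measures $\frac{1}{[k(c_n):k]}\sum_\sigma \delta_{\sigma(c_n)}$ are supported in the fixed compact set $\mathbf{M}_{d,v}$. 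Under these circumstances, a truncation argument--- replacing $\log|z-\tau(\alpha)|_v$ by $\max\{\log|z-\tau(\alpha)|_v,-N\}$, applying weak convergence, and letting $N\to\infty$---together with uniform control on the local potentials of the atomic measures should upgrade weak-$*$ convergence to convergence of logarithmic potentials at every point of $\CC$, including points of $\partial\mathbf{M}_{d,v}$.

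The crux, and the reason the statement remains a conjecture in the paper, is supplying those uniform estimates: one needs a quantitative archimedean lower bound on how many Galois conjugates $\sigma(c_n)$ can cluster near a fixed point $\tau(\alpha)$ of $\partial\mathbf{M}_{d,v}$---equivalently, a lower bound on the logarithmic capacity or an explicit discrepancy estimate for Galois orbits of PCF parameters. Such a bound does not follow from weak-$*$ equidistribution alone and seems to require either an effective equidistribution theorem with explicit convergence rates or an input specific to the unicritical family (for example, quantitative transversality of Misiurewicz relations at an arbitrary point of the bifurcation locus, along the lines of Buff--Epstein). I expect this quantitative archimedean no-concentration estimate near a boundary parameter to be the main technical difficulty; once in hand, the rest of the argument above should go through in essentially the same way as the proof of Theorem~\ref{thm:pcffin}.
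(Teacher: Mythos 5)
The statement you were asked to prove is labeled a \emph{conjecture} in the paper; the authors do not prove it, and they explicitly identify the archimedean boundary case as the obstruction. Your proposal matches the paper's own plan: right after Theorem~\ref{thm:logequi} the authors note that if the hypothesis $\alpha\notin\partial\mathbf{M}_{d,v}$ could be removed from that theorem, then Conjecture~\ref{conj:pcffin} would follow by the argument of Section~\ref{sec:finite}. Your framework --- equidistribution toward $\mu_{d,v}$, reduction via the product formula, and identification of the resulting sum with $\hat{h}_{d,\alpha}(\alpha)>0$ --- is exactly that plan, and your diagnosis of the missing piece is accurate.

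The gap you flag is genuine. Weak-$*$ convergence combined with the truncation $\max\{\log|z-\tau(\alpha)|_v,-N\}$ gives only the one-sided bound $\limsup_n \frac{1}{[k(c_n):k]}\sum_\sigma\log|\sigma(c_n)-\tau(\alpha)|_v \le \int\log|z-\tau(\alpha)|_v\,d\mu_{d,v}(z)$; the matching lower bound requires a quantitative non-concentration estimate for the Galois orbits of PCF parameters near a boundary point of $\mathbf{M}_{d,v}$, and neither weak-$*$ equidistribution nor H\"older continuity of $G_{d,v}$ supplies this. Your suggestion that effective equidistribution or quantitative Misiurewicz transversality would be needed is a sensible reading of why the statement remains open.

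One bookkeeping slip: by Lemma~\ref{lem:locht}, the identity is $\int\log|z-\alpha|_v\,d\mu_{d,v}(z)=G_{d,v}(\alpha)$ at every place, not $\int\log|z-\alpha|_v\,d\mu_{d,v}(z)-\log^+|\alpha|_v=G_{d,v}(\alpha)$; at non-archimedean $v$ the integral already equals $\log^+|\alpha|_v=G_{d,v}(\alpha)$, so subtracting would give $0$. The argument still closes with the correct identity, since the non-$S$ places contribute $\log^+|\alpha|_v=G_{d,v}(\alpha)$ and the $S$-places contribute $\int\log|z-\alpha|_v\,d\mu_{d,v}(z)=G_{d,v}(\alpha)$ in the limit, summing to $\hat{h}_{d,\alpha}(\alpha)>0$ against the vanishing product-formula total --- which is exactly what the proof of Theorem~\ref{thm:pcffin} does.
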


On the other hand,
the hypothesis that $\alpha$ is not a PCF parameter cannot be removed,
as we will show in Theorem~\ref{thm:pcfinf}.

For the family $f_{d,c}$, the parameter $c$ lives in a moduli space isomorphic to $\AAA^1$,
and the values of $c$ for which $f_{d,c}$ is PCF may be considered as special points on
this variety, analogous to torsion points on an abelian variety or CM points on a modular curve.
From this perspective, Theorem~\ref{thm:pcffin}, Conjecture~\ref{conj:pcffin},
and Theorem~\ref{thm:pcfinf} describe the integrality of
these (dynamically) special points relative both to special and non-special points
on this moduli space. We propose generalizations of this idea to other moduli spaces,
including higher-dimensional moduli spaces, in Section~\ref{sec:conj}.

Our strategy to prove Theorem~\ref{thm:pcffin} is as follows.
First, at each place $v$ of $k$, we apply the equidistribution theorems
of \cite{GKNY;17} or \cite{Yuan;08}, which say that atomic measures 
supported equally on the Galois orbits of PCF parameters converge weakly
to the so-called \emph{bifurcation measure} of the family $f_{d,c}$ at $v$.
However, we wish to integrate the function
$\log|x-\alpha|_v$ against these measures,
and the discontinuity at $x=\alpha$ means that the equidistribution theorems
do not apply directly in this case.
Therefore, we invoke \cite[Theorem~1.4]{BI;20;1} for 
$v$ non-archimedean, and the hypothesis that $\alpha\not\in\partial\mathbf{M}_{d,v}$
for $v$ archimedean, to prove our desired local convergence result,
which we state as Theorem~\ref{thm:logequi}.

Second,  we write the (strictly) positive canonical height $\hat{h}_{d,\alpha}(\alpha)$
associated with the map $f_{d,\alpha}$ by a sum of canonical local heights.
According to Theorem~\ref{thm:logequi},
given a hypothetical sequence of PCF parameters $(x_n)_{n\geq 1}$ 
in $\kbar$ that are $S$-integral with respect to $(\alpha)$,
we may approximate these canonical local heights by 
integrals of $\log|x-\alpha|_v$ with respect to
atomic measures $\nu_n$ supported on the Galois orbits of $x_n$.
Finally, using the $((\alpha),S)$-integrality of $x_n$, we
rewrite the sum of canonical local heights 
and invoke the product formula to show that the sum approaches $0$,
contradicting the fact that $\hat{h}_{d,\alpha}(\alpha)>0$.

The outline of the paper is as follows.
In Section~\ref{sec:heights}, we recall some fundamental facts about
canonical (local) heights, and we relate the bifurcation measure at $v\in M_k$
of the family $f_{d,c}$ to the canonical local height $\hat{\lambda}_{d,c,v}$.
We then state and prove Theorem~\ref{thm:logequi}
in Section~\ref{sec:logequi}, computing certain canonical local heights
in terms of limits involving PCF parameters.
In Section~\ref{sec:finite}, we use Theorem~\ref{thm:logequi} to
prove Theorem~\ref{thm:pcffin}.
Section~\ref{sec:accumpcf} is devoted to the
statement and proof of Theorem~\ref{thm:pcfinf},
showing that the conclusions of
Theorem~\ref{thm:pcffin} and Conjecture~\ref{conj:pcffin}
fail when $\alpha$ is allowed to be a PCF parameter.
Finally, in Section~\ref{sec:conj}, we state and discuss a 
generalization of Conjecture~\ref{conj:pcffin}.

%

\section{Canonical heights and bifurcation measures}
\label{sec:heights}

\subsection{Call-Silverman canonical heights}
For any place $v\in M_k$ and for $f(z)\in\Cv[z]$ a polynomial of degree $d\geq 2$,
the associated (\emph{Call-Silverman}) \emph{canonical local height function}
$\hat{\lambda}_{f,v}:\Cv\to\RR$ is given by
\begin{equation}
\label{eq:locht}
\hat{\lambda}_{f,v}(x) :=
\lim_{n\to\infty} \frac{1}{d^n} \log\max\big\{1, \big| f^n(x) \big|_v\big\} .
\end{equation}
The function $\hat{\lambda}_{f,v}$ takes nonnegative values, and it is strictly positive
exactly at points $x\in\Cv$ for which $f^n(x)\to\infty$ as $n\to\infty$.
(That is, $\hat{\lambda}_{f,v}$ is zero precisely on the \emph{filled Julia set} of $f$ at $v$,
i.e., on the set of points $z$ that do not escape to $\infty$ under iteration of $f$.)
Moreover, $\hat{\lambda}_{f,v}$ differs from the
standard local height function $\lambda_v(x):=\log\max\{1,|x|_v\}$
by a bounded amount, and the two coincide for all but finitely many $v$.

For a polynomial $f(z)\in k[z]$ of degree $d\geq 2$, 
the associated (\emph{Call-Silverman}) \emph{canonical height function}
$\hat{h}_{f}:k \to\RR$ is  given by
\begin{equation}
\label{eq:globht}
\hat{h}_f(x) =  \frac{1}{ [k : \mathbb Q]} \sum_{v\in M_k} N_v \hat{\lambda}_{f,v}(x)
= \lim_{n\to\infty} \frac{1}{d^n} h\big( f^n(x) \big).
\end{equation}
The coefficients $N_v=[k_v:\Qp]$ in equation~\eqref{eq:globht}, where $v|p$,
are the integers appearing in the product formula over $k$, i.e.,
$\sum_v N_v \log |x|_v= 0$ for all $x\in k^{\times}$.
The function $h$ is the standard Weil height on $k$, given by 
\[ h(x):= \frac{1}{ [k : \mathbb Q]}\sum_{v\in M_k} N_v \log\max\{1,|x|_v\}
= \frac{1}{ [k : \mathbb Q]} \sum_{v\in M_k} N_v \lambda_v(x) . \]
Both $h$ and $\hat{h}_f$ have natural extensions to $\kbar$, but we will only need
their values on $k$, for which the above definitions suffice.
The function $\hat{h}_f$ takes on nonnegative values, 
with $\hat{h}_f(x)=0$ if and only if $x\in\kbar$ is preperiodic under $f$.
In addition, $\hat{h}_f$ differs from $h$ by a bounded amount,
and it satisfies the functional equation $\hat{h}_f(f(z))=d\hat{h}_f(z)$.
Call-Silverman heights were introduced in \cite{CalSil;93};
see also \cite[Sections~3.4--3.5]{Sil;ADS}.

\medskip

\subsection{The bifurcation measure}
To simplify notation, for the polynomial $f_{d,c}(z)=z^d+c$, we will denote
the associated canonical local height function at $v$ by $\hat{\lambda}_{d,c,v}$,
and the associated canonical height function by $\hat{h}_{d,c}$.
If we view the parameter $c$ as the variable in this notation, then we obtain
the Green's function $G_{d,v}:\Cv\to\RR$ given by
\[ G_{d,v}(c):=\hat{\lambda}_{d,c,v}(c) =
\lim_{n\to\infty} \frac{1}{d^n} \log\max\big\{1, \big| f_{d,c}^n(c) \big|_v \big\}. \]
That is, $G_{d,v}$ measures the $v$-adic escape rate of the critical point of $f_{d,c}$,
and hence $G_{d,v}$ is zero precisely on the multibrot set $\mathbf{M}_{d,v}$,
and strictly positive on $\Cv\smallsetminus\mathbf{M}_{d,v}$.

For each place $v\in M_k$, recall that $\PBerkv$
denotes the \emph{Berkovich projective line} at $v$.
If $v$ is an archimedean place, then $\Cv$ may be identified with $\CC$,
and hence $\PBerkv$ is simply the Riemann sphere $\PC$.
On the other hand, if $v$ is non-archimedean, then $\PBerkv$ properly contains $\PCv$.
In particular, for each $a\in\Cv$ and $r>0$, there is a point $\zeta(a,r)\in\PBerkv$
corresponding to the closed disk $\Dbar(a,r):=\{x\in\Cv : |x-a|_v\leq r\}$.
The Berkovich point $\zeta(0,1)$ corresponding to the closed unit disk is called
the \emph{Gauss point}.
For background on $\PBerkv$,
see \cite[Chapter~6]{BenBook} or \cite[Chapters~1--2]{BR;10}.

If $v$ is an archimedean place, so that $\Cv\cong\CC$,
the (potential-theoretic) Laplacian of
$G_{d,v}$ is a probability measure $\mu_{d,v}$ on $\CC$,
called the \emph{bifurcation measure} of the family $f_{d,c}$.
As its name suggests, the support of $\mu_{d,v}$
is precisely the bifurcation locus $\partial\mathbf{M}_{d,v}$ of the family;
see, for example, \cite[Proposition~3.3.(5)]{BD;11} or \cite[Sections~4.1--4.2]{GKNY;17}.

If $v$ is a non-archimedean place, then it is easy to see that
the sequence $(|f_{d,c}^n(c)|_v)_{n\geq 1}$ is bounded if and only if $|c|_v\leq 1$.
In fact, we have the explicit formula $G_{d,v}(c)=\log\max\{1,|c|_v\}$,
which has a unique continuous extension to $\PBerkv\smallsetminus\{\infty\}$.
There is a Laplacian operator on $\PBerkv$; see, for example,
\cite[Section~13.4]{BenBook} for a brief survey, or \cite[Chapters~3--5]{BR;10}
for a detailed exposition. As shown in \cite[Proposition~3.7.(1)]{BD;11},
the Laplacian of $G_{d,v}$, when restricted to
$\PBerkv\smallsetminus\{\infty\}$, is the desired probability measure $\mu_{d,v}$.
In our case,
this measure is $\mu_{d,v}=\delta_{\zeta(0,1)}$, the delta measure
at the Gauss point; see \cite[Example~5.19]{BR;10} or \cite[Example~13.26]{BenBook}.

Since the bifurcation measure $\mu_{d,v}$ is defined as the Laplacian of
$G_{d,v}(c)=\hat{\lambda}_{d,c,v}(c)$, it should not be surprising that we can
recover the canonical local height $\hat{\lambda}_{d,c,v}(c)$ by integrating
an appropriate kernel against $\mu_{d,v}$, as follows.

\begin{lemma}
\label{lem:locht}
Let $d\geq 2$ be an integer, let $v\in M_k$, and let $\alpha \in \Cv$.
Let $\hat{\lambda}_{d,\alpha,v}$ be the canonical local height function of
equation~\eqref{eq:locht} for the map $f_{d,\alpha}(z)=z^d+\alpha$.
Let $\mu_{d,v}$ be the bifurcation measure of the family $f_{d,c}$. Then
\begin{equation}
\label{eq:lochtint}
\int_{\PBerkv} \log |x-\alpha|_v \, d\mu_{d,v}(x)
= \hat{\lambda}_{d,\alpha,v}(\alpha) .
\end{equation}
\end{lemma}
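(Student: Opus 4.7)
The plan is to verify that both sides of \eqref{eq:lochtint} define the same subharmonic function of $\alpha$, exploiting the fact (recalled in the excerpt) that $\mu_{d,v}$ is by construction the Laplacian of the Green's function $G_{d,v}(c)=\hat{\lambda}_{d,c,v}(c)$. Since $\hat{\lambda}_{d,\alpha,v}(\alpha)=G_{d,v}(\alpha)$ by definition, the lemma reduces to showing that the logarithmic potential
\[
p_{\mu_{d,v}}(\alpha) := \int_{\PBerkv} \log|x-\alpha|_v \, d\mu_{d,v}(x)
\]
coincides with $G_{d,v}(\alpha)$ for every $\alpha\in\Cv$. I would then split into the archimedean and non-archimedean cases.

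For $v$ non-archimedean I would just use the explicit description $\mu_{d,v}=\delta_{\zeta(0,1)}$ given in the excerpt. Then $p_{\mu_{d,v}}(\alpha)=\log|\zeta(0,1)-\alpha|_v$, and the standard Berkovich identity $|\zeta(0,1)-\alpha|_v=\max\{1,|\alpha|_v\}$ immediately yields $p_{\mu_{d,v}}(\alpha)=\log\max\{1,|\alpha|_v\}$, which matches the explicit formula $G_{d,v}(\alpha)=\log\max\{1,|\alpha|_v\}$ quoted in the text. This half is a direct computation with no real content beyond the Berkovich evaluation formula.

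For $v$ archimedean I would invoke classical logarithmic potential theory on $\CC$. Both $p_{\mu_{d,v}}$ and $G_{d,v}$ are subharmonic functions whose distributional Laplacians equal (a fixed normalization of) $\mu_{d,v}$: for $p_{\mu_{d,v}}$ this is the defining property of the logarithmic potential, and for $G_{d,v}$ it is precisely the identification of the bifurcation measure recalled in Section~\ref{sec:heights}. Consequently the difference $G_{d,v}-p_{\mu_{d,v}}$ is harmonic on all of $\CC$. Because $\mu_{d,v}$ is a probability measure supported on the compact set $\partial\mathbf{M}_{d,v}$, an expansion of $\log|x-\alpha|$ in powers of $x/\alpha$ gives $p_{\mu_{d,v}}(\alpha)=\log|\alpha|+o(1)$ as $|\alpha|\to\infty$. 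I would check the matching asymptotic $G_{d,v}(\alpha)=\log|\alpha|+o(1)$ by an elementary inductive estimate: for $|\alpha|_v$ sufficiently large one has $|f_{d,\alpha}^n(\alpha)|_v=|\alpha|_v^{d^n}\bigl(1+O(|\alpha|_v^{-1})\bigr)$, whence $\tfrac{1}{d^n}\log|f_{d,\alpha}^n(\alpha)|_v\to\log|\alpha|_v$. Therefore $G_{d,v}-p_{\mu_{d,v}}$ is a harmonic function on $\CC$ that tends to $0$ at infinity, and Liouville's theorem forces it to vanish identically.

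The substantive step is the archimedean comparison, and within it the main technical point is the uniform asymptotic $G_{d,v}(\alpha)-\log|\alpha|\to 0$; once this is in hand, the coincidence of two subharmonic functions with the same Laplacian and the same growth at infinity is a standard consequence of the maximum principle. Everything else (the non-archimedean case, the subharmonicity of the potential, the fact that the supports involved are compact) is routine, and the key input --- that $\mu_{d,v}$ is the Laplacian of $G_{d,v}$ --- has already been recorded in the preceding discussion of the bifurcation measure.
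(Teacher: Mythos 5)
Your proof is correct, but it takes a genuinely different route from the paper's. The paper treats both the archimedean and non-archimedean cases uniformly by citing a general Berkovich potential-theoretic identity, namely~\cite[Example~5.22]{BR;10}, applied with $\zeta=\infty$, $\nu=\mu_{d,v}$, and $u_\nu(x,\zeta)=G_{d,v}(x)$ (and a remark about the sign convention on the Laplacian). Your argument instead splits into cases: for $v$ non-archimedean you compute directly from $\mu_{d,v}=\delta_{\zeta(0,1)}$ and the Berkovich evaluation $\log|\zeta(0,1)-\alpha|_v=\log\max\{1,|\alpha|_v\}$, while for $v$ archimedean you run a classical potential-theory comparison --- both $G_{d,v}$ and the logarithmic potential $p_{\mu_{d,v}}$ are subharmonic with Laplacian $\mu_{d,v}$, so their difference is harmonic on $\CC$; the asymptotic $G_{d,v}(\alpha)=\log|\alpha|+o(1)$ as $|\alpha|\to\infty$ (valid since $|f_{d,\alpha}^n(\alpha)|=|\alpha|^{d^n}(1+O(|\alpha|^{1-d}))$ for large $|\alpha|$) matches the asymptotic of the potential of the compactly supported probability measure $\mu_{d,v}$, so the difference tends to $0$ at infinity and is therefore identically zero by Liouville. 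Your version is more self-contained and more elementary in the archimedean case, at the price of being longer and requiring the normalization of $\Delta$ to be tracked carefully so that $\Delta\log|z|=\delta_0$ matches $\Delta G_{d,v}=\mu_{d,v}$; the paper's citation to \cite[Example~5.22]{BR;10} buys uniformity across places and avoids the explicit growth estimate, at the cost of offloading all the work to Baker--Rumely. Both are valid.
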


\begin{proof}
Since $\hat{\lambda}_{d,\alpha,v}(\alpha) = G_{d,v}(\alpha)$,
and since $\Delta_v G_{d,v}= \mu_{d,v}$,
where $\Delta_v$ is the Laplacian on $\PBerkv$, the desired
statement is the content of \cite[Example~5.22]{BR;10}.
More precisely, in the notation of equation~(5.8) of that example,
we use $\zeta=\infty$, $\nu=\mu_{d,v}$, and $u_\nu(x,\zeta)=G_{d,v}(x)$,
and we bear in mind that the Laplacian of \cite{BR;10} is the negative of our $\Delta_v$.
\end{proof}

\begin{remark}
If we fix the parameter $c\in\Cv$, then the Laplacian of the canonical local height
function $\hat{\lambda}_{d,c,v}$ is a probability measure $\rho_{d,c,v}$
on $\PBerkv\smallsetminus\{\infty\}$, known as the \emph{equilibrium measure}
of the polynomial $f_{d,c}$ at $v$. Note that the bifurcation measure $\mu_{d,v}$
is a measure on the parameter space (corresponding to $c$),
whereas the equilibrium measure
$\rho_{d,c,v}$ is on the dynamical space (corresponding to $z$).
The support of the equilibrium measure is precisely the ($v$-adic)
\emph{Julia set} of $f_{d,c}$, i.e., the boundary of the filled Julia set.
By similar reasoning as in the proof of Lemma~\ref{lem:locht},
we can also express $\hat{\lambda}_{d,\alpha,v}(\alpha)$ in terms of the equilibrium measure.
Specifically, we can expand equation~\eqref{eq:lochtint} to
\[ \int_{\PBerkv} \log |x-\alpha|_v \, d\mu_{d,v}(x)
= \hat{\lambda}_{d,\alpha,v}(\alpha) 
= \int_{\PBerkv}  \log |x-\alpha|_v\, d\rho_{d,\alpha,v}(x) .\]
\end{remark}

\section{Logarithmic equidistribution of PCF points}
\label{sec:logequi}

\begin{defin}
\label{def:equidist}
Let $v\in M_k$ and let $(x_n)_{n\geq 1}$ be a sequence of points in $\PP^1(\kbar)$.
For each integer $n\geq 1$, let $\nu_n$ be the atomic probability measure
\begin{equation}
\label{eq:atomdef}
\nu_n := \frac{1}{|Gx_n|} \sum_{y\in Gx_n} \delta_y,
\end{equation}
where $Gx_n$ denotes the set of $\Gal(\kbar/k)$-conjugates of $x_n$,
and $\delta_y$ is the delta-measure on $\PBerkv$ supported at $y$.

Let $\mu$ be a Borel probability measure on $X:=\PBerkv$.
We say that the Galois orbits of $(x_n)_{n\geq 1}$ 
are \emph{equidistributed}
with respect to $\mu$ if $(\nu_n)_{n\geq 1}$ converges weakly to $\mu$,
i.e., if for every continuous, compactly supported function
$g:X\to\RR$, we have
\begin{equation}
\label{eq:equidef}
\lim_{n\to\infty} \int_X g(x) \, d\nu_n(x) = \int_X g(x) \, d\mu(x) .
\end{equation}
\end{defin}

For archimedean $v$,
the boundary $\partial\mathbf{M}_{d,v}$ of the multibrot set $\mathbf{M}_{d,v}$
is the bifurcation locus for the family $f_{d,c}$.
Indeed, for any $c\in\partial\mathbf{M}_{d,v}$, there are nearby parameters
$\gamma\in\Cv\cong\CC$
for which $\gamma\in\mathbf{M}_{d,v}$ and hence $f_{d,\gamma}$ has connected Julia set,
and others  for which $\gamma\not\in\mathbf{M}_{d,v}$
and hence $f_{d,\gamma}$ has disconnected Julia set.
All parameters $c\in\CC$ for which $f_{d,c}$ is PCF clearly lie in $\mathbf{M}_{d,v}$.
If $z=0$ is periodic under $f_{d,c}$, then $z=0$ is a (super)attracting periodic
point of $f_{d,\gamma}$.
In that case, the map $f_{d,c}$ is hyperbolic
(see, for example, \cite[Section~V.2]{CG} or \cite[Section 14.1]{DH;84}),
and hence $c$ lies in the interior of $\mathbf{M}_{d,v}$. In fact, since the multiplier of
the (unique) attracting cycle of $f_{d,\gamma}$
varies analytically for $\gamma$ near such $c$,
there is an open neighborhood of $c$ containing no other parameters $\gamma$
for which $f_{d,\gamma}$ is PCF. 
On the other hand, if $z=0$ is strictly preperiodic under $f_{d,c}$, then $c$ is called
a \emph{Misiurewicz parameter}, and we have $c\in\partial\mathbf{M}_{d,v}$.
(See, for example, \cite[Section~VIII.1]{CG}.)

\begin{thm}
\label{thm:logequi}
Let $v\in M_k$ and $\alpha\in \Cv$. Fix an integer $d\geq 2$, and 
let $(x_n)_{n\geq 1}$ be a sequence of distinct points in $\kbar\smallsetminus\{\alpha\}$
such that $z\mapsto z^d+x_n$ is PCF for each $n\geq 1$.
If $v$ is an archimedean place of $k$, assume that $\alpha\not\in\partial\mathbf{M}_{d,v}$.
Then
\begin{equation}
\label{eq:lim}
\lim_{n \rightarrow \infty} \frac{1}{ [ k(x_n) : k ] } 
\sum_{\sigma}
\log | x_n^{\sigma} - \alpha |_{v} =   
\int_{ \PBerkv } \log \big| x - \alpha \big|_{v} \; d\mu_{d, v} (x),
\end{equation}
where the sum is over all $k$-embeddings $\sigma:k(x_n)\hookrightarrow\Cv$.
\end{thm}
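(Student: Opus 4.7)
The plan is to split into cases by the nature of the place $v$, since in both cases the only obstruction to a direct appeal to equidistribution is the logarithmic singularity of the integrand $g(x) := \log|x-\alpha|_v$ at $x=\alpha$.

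For $v$ non-archimedean, I would invoke \cite[Theorem~1.4]{BI;20;1} directly. That result is designed precisely for logarithmic equidistribution of Galois orbits against a singular kernel, for sequences of algebraic parameters of canonical height tending to zero; the PCF hypothesis on $(x_n)_{n\geq 1}$ supplies the vanishing height.

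For $v$ archimedean (so $\Cv\cong\CC$), the measures $\nu_n$ of Definition~\ref{def:equidist} converge weakly to $\mu_{d,v}$ by the arithmetic equidistribution theorem of \cite{GKNY;17} or \cite{Yuan;08}, but $g$ is not admissible as a test function because of its pole at $\alpha$. The key geometric step is: since $\alpha\notin\partial\mathbf{M}_{d,v}$ and the latter is closed, there is $r>0$ with $\overline{B(\alpha,r)}\cap\partial\mathbf{M}_{d,v}=\emptyset$, and then $\overline{B(\alpha,r)}$ is a compact subset of $\CC\setminus\partial\mathbf{M}_{d,v}$, lying in a single connected component $U$. If $U$ is the unbounded exterior of $\mathbf{M}_{d,v}$, then $\overline{B(\alpha,r)}$ contains no PCF parameters. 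If $U$ is a bounded component (a hyperbolic component of the interior of $\mathbf{M}_{d,v}$), then as recalled in the discussion just before the theorem, the PCF parameters in $U$ are the isolated centers at which $z=0$ is periodic; being isolated in $U$ and confined to the compact subset $\overline{B(\alpha,r)}$, there are only finitely many such $\alpha_1,\ldots,\alpha_m\in\kbar$ in $B(\alpha,r)$. Since $(x_n)$ is distinct and each $k$-Galois orbit $G\alpha_i$ is finite, for all $n$ large enough no conjugate $x_n^\sigma$ lies in $B(\alpha,r)$.

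With $\alpha$ isolated from the supports of almost all $\nu_n$, I would truncate the integrand. Put $R:=-\log r$ and $g_R(x):=\max\{\log|x-\alpha|_v,\,-R\}$, which is continuous and bounded on $\CC$. Since the $\nu_n$ and $\mu_{d,v}$ are supported on the compact set $\mathbf{M}_{d,v}$, weak convergence applied to $g_R$ gives
\[
\lim_{n\to\infty}\int g_R\,d\nu_n \;=\; \int g_R\,d\mu_{d,v}.
\]
Now $g_R=g$ off $B(\alpha,r)$; since $\mathrm{supp}(\mu_{d,v})\subseteq\partial\mathbf{M}_{d,v}$ is disjoint from $B(\alpha,r)$, we get $\int g_R\,d\mu_{d,v}=\int g\,d\mu_{d,v}$, which is the right-hand side of~\eqref{eq:lim}; and for $n$ large the support of $\nu_n$ is also disjoint from $B(\alpha,r)$, giving $\int g_R\,d\nu_n=\int g\,d\nu_n$, which equals the average on the left-hand side of~\eqref{eq:lim}. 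Combining these three identifications yields the theorem. The main obstacle, in both settings, is the singularity of $g$ at $\alpha$: in the non-archimedean case it is absorbed by \cite[Theorem~1.4]{BI;20;1}, while in the archimedean case the crux is the geometric argument that the dichotomy between the exterior of $\mathbf{M}_{d,v}$ and its interior hyperbolic components forces the support of $\nu_n$ to avoid a fixed neighborhood of $\alpha$ once $n$ is large.
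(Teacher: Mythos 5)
Your archimedean argument is essentially the paper's own, recast in terms of the connected component $U$ of $\CC\smallsetminus\partial\mathbf{M}_{d,v}$ containing $\overline{B(\alpha,r)}$ rather than via a case split on whether $\alpha\in\mathbf{M}_{d,v}$; the underlying facts (interior PCF parameters are isolated Gleason centers, Misiurewicz parameters lie on the boundary) are the same, and your conclusion that the Galois orbits of the $x_n$ eventually avoid $B(\alpha,r)$ is correct. One technical caution: $g_R(x)=\max\{\log|x-\alpha|_v,-R\}$ is continuous but unbounded as $|x|\to\infty$, so it is not an admissible test function on $\PP^1(\CC)$ as written; you should cut it off with a compactly supported bump function equal to $1$ on $\mathbf{M}_{d,v}$, which is exactly what the paper does. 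Also, calling $U$ a ``hyperbolic component'' claims more than you need and more than is known (density of hyperbolicity is open); the argument only requires that interior PCF parameters are isolated, which is the Gleason/Misiurewicz dichotomy recalled just before the theorem.

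The non-archimedean half, by contrast, has a genuine gap. You describe \cite[Theorem~1.4]{BI;20;1} as a result ``designed precisely for logarithmic equidistribution of Galois orbits against a singular kernel,'' but that is not what the cited theorem says, and it cannot simply be ``invoked directly.'' As the paper uses it, that theorem is a \emph{discreteness} statement: for any $0<r<1$, only finitely many PCF parameters lie in $\Dbar(\alpha,r)$. That is the non-archimedean analogue of your isolated-centers observation, and it does not by itself yield equation~\eqref{eq:lim}; you still need to bridge from discreteness to the limit. The paper supplies two bridges. One mirrors your archimedean argument: once the Galois orbits eventually avoid a fixed disk about $\alpha$, truncate and apply equidistribution on $\PBerkv$ via \cite[Corollary~2.10]{BD;11}. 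The other avoids equidistribution entirely: for non-archimedean $v$, $\mu_{d,v}=\delta_{\zeta(0,1)}$, so the right side of \eqref{eq:lim} is $\log\max\{1,|\alpha|_v\}$; PCF parameters are algebraic integers, so $|x_n^\sigma|_v\leq 1$ for every $\sigma$; if $|\alpha|_v>1$ both sides equal $\log|\alpha|_v$ by the ultrametric inequality, and if $|\alpha|_v\leq 1$ one squeezes the left side between $\log r$ and $0$ for every $r<1$ using discreteness and lets $r\nearrow 1$. As written, your non-archimedean case is a citation to a result you have mischaracterized, with no actual argument attached.
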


We note that if the function $x\mapsto\log |x-\alpha|_v$ were continuous on $\Cv$,
then equation~\eqref{eq:lim} would simply be an instance of equidistribution,
that is, equation~\eqref{eq:equidef} with $g(x)=\log |x-\alpha|_v$. However,
the discontinuity at $x=\alpha$ means that equidistribution results
(like \cite[Theorem~7.52]{BR;10} or \cite[Theorem~3.1]{Yuan;08})
do not apply directly here.

When $v$ is non-archimedean, the function $x\mapsto  \log | x - \alpha |_{v}$
has a unique continuous extension to $\PBerkv\smallsetminus\{\alpha,\infty\}$,
and it is this extension 
that appears as the integrand in equation~\eqref{eq:lim}.
In particular, this extended function maps the Gauss point $\zeta(0,1)$
to $\log\max\{1,|\alpha|_v\}$; intuitively, this is the generic value of $\log|x-\alpha|_v$
for $x$ in the closed unit disk $\Dbar(0,1)$.
As noted just before Lemma~\ref{lem:locht},
we also have $\mu_{d,v}=\delta_{\zeta(0,1)}$ in this case,
and hence the integral in equation~\eqref{eq:lim} evaluates simply to
$\log\max\{1,|\alpha|_v\}$.

As we remarked following Theorem~\ref{thm:pcffin},
we expect that the condition that $\alpha\not\in\partial\mathbf{M}_{d,v}$
for archimedean $v$ should not be required in Theorem~\ref{thm:logequi},
provided $\alpha\in\kbar$.
If so, then the resulting strengthened version of Theorem~\ref{thm:logequi} for such $\alpha$ 
would yield Conjecture~\ref{conj:pcffin}, by the argument given in Section~\ref{sec:finite} below.

\begin{proof}[Proof of Theorem~\ref{thm:logequi}]
\textbf{Case 1}: $v$ is an archimedean place. 
By hypothesis, there exists $r>0$ such that
the open disk $D(\alpha,r):=\{y\in\Cv : |y-\alpha|_v < r \}$ does not intersect the closed set
$\partial\mathbf{M}_{d,v}$. If $\alpha$ lies outside $\mathbf{M}_{d,v}$, then 
$f_{d,\gamma}$ is not PCF for any $\gamma\in D(\alpha,r)$.
On the other hand, if $\alpha\in\mathbf{M}_{d,v}$, then
as noted in the discussion following Definition~\ref{def:equidist},
some neighborhood of $\alpha$ contains no PCF parameters, except perhaps $\alpha$
itself. Either way, then, we may decrease $r>0$ if necessary so that
$f_{d,\gamma}$ is not PCF for any $\gamma\in D(\alpha,r)\smallsetminus\{\alpha\}$.

Let $\psi:\Cv\to [0,1]$ be a continuous function that is $1$ on $\mathbf{M}_{d,v}$
and is $0$ outside some large disk containing $\mathbf{M}_{d,v}$.
Define $g:\Cv\to\RR$ by $g(x):=\psi(x)\cdot\log\max\{r, |x-\alpha|_v\}$,
where $r$ is as defined in the previous parargraph.
Observe that $g$ is continuous and has compact support.
In light of the previous paragraph, since each $x_n^{\sigma}$ is PCF, we have
\begin{equation}
\label{eq:gxn}
g(x_n^{\sigma})=\log |x_n^{\sigma}-\alpha|_v
\end{equation}
for all $n\geq 1$ and all $k$-embeddings  $\sigma:k(x_n)\hookrightarrow\Cv$.

By \cite[Theorem~3.1]{GKNY;17}, the Galois orbits of $(x_n)_{n\geq 1}$
are equidistributed with respect to the bifiurcation measure $\mu_{d,v}$.
Thus, defining the measures $\nu_n$
as in equation~\eqref{eq:atomdef}, we have
\begin{align*}
\lim_{n \rightarrow \infty} \frac{1}{ [ k(x_n) : k ] } 
\sum_{\sigma} \log \big| x_n^{\sigma} - \alpha \big|_{v}
&=  
\lim_{n \rightarrow \infty} \frac{1}{ [ k(x_n) : k ] } 
\sum_{\sigma} g\big(x_n^\sigma\big)
\notag \\
&=  \lim_{n \rightarrow \infty} \int_{\PBerkv} g(x) \, d\nu_n(x)
= \int_{\PBerkv} g(x) \, d\mu_{d,v}(x)
\\
&= \int_{\PBerkv} \log|x-\alpha|_v \, d\mu_{d,v}(x),
\notag
\end{align*}
where the first equality is by equation~\eqref{eq:gxn}, the second is by definition of $\nu_n$,
the third is by equidistribution, and the fourth is because the disk $D(\alpha,r)$
is disjoint from the support $\partial\mathbf{M}_{d,v}$ of $\mu_{d,v}$.

\smallskip

\textbf{Case 2}: $v$ is non-archimedean. By \cite[Theorem~1.4]{BI;20;1}, there
are only finitely many parameters $\gamma\in\Cv$ such that $f_{d,\gamma}$ is PCF
with $|\gamma-\alpha|_v<1/2$. The desired equality follows essentially as
in Case~1, with \cite[Corollary~2.10]{BD;11} showing the requisite equidistribution.

We also provide an alternative proof not using equidistribution,
but still using results from \cite{BI;20;1}, as follows.
As noted just before the start of this proof,
the integral on the right side of
equation~\eqref{eq:lim} is simply $\log\max\{1,|\alpha|_v\}$.
Recall that each PCF parameter for the family $f_{d,c}$ is an algebraic integer,
and therefore $|x_n^{\sigma}|_v\leq 1$ for every $n$ and $\sigma$.
Thus, if $|\alpha|_v>1$, then $|x_n^{\sigma}-\alpha|_v=|\alpha|_v$ for every $n$ and $\sigma$,
and hence both sides of equation~\eqref{eq:lim} equal $\log|\alpha|_v$.

It suffices to show that the left side of equation~\eqref{eq:lim} is zero when $|\alpha|_v\leq 1$.
By \cite[Theorem~1.4]{BI;20;1}, for every $0<r<1$, there are only finitely many
PCF parameters of the family $f_{d,c}$ in the disk $\Dbar(\alpha,r)$.
Therefore, for any $0<r<1$, there is some $N\geq 1$ such that $r<|x_n^{\sigma}-\alpha|_v<1$
for any $n\geq N$ and any $k$-embedding $\sigma$. Thus, we have
\[ 0 \geq \lim_{n \to \infty} \frac{1}{ [k(x_n) : k] }  
\sum_{ \sigma} \log \big| x_n^{\sigma} - \alpha \big|_v \geq \log r \]
for every such $r$. Letting $r\nearrow 1$, the limit is $0$, as desired.
\end{proof}

\section{Finiteness of integral PCF points}
\label{sec:finite}
We are now prepared to prove Theorem~\ref{thm:pcffin},
using Theorem~\ref{thm:logequi}.

\begin{proof}[Proof of Theorem~\ref{thm:pcffin}]
Replacing $k$ by a finite extension if necessary,
we may assume that $\alpha\in k$.
Increasing the finite set $S$ if necessary, we may also assume
that $|\alpha|_v\leq 1$ for every $v\in M_k\smallsetminus S$.

As in Section~\ref{sec:heights},
let $\hat{h}_{d,\alpha}$ be the (Call-Silverman)
canonical height on $\Pkbar$ attached to $f_{d,\alpha} (z) = z^d + \alpha$
(and the divisor (${\infty}$)). For any $\beta\in k$, we have
\[ \hat{h}_{d,\alpha} (\beta) = \frac{1}{ [k : \mathbb Q] }
\sum_{ v \in M_k } N_v \hat{\lambda}_{d,\alpha,v} (\beta), \]
where the integers $N_v=[k_v : \Qp]$ are as in equation~\eqref{eq:globht}.
In addition, because $f_{d,\alpha}$ is not PCF, the critical point $z=0$
is not preperiodic, and hence $\hat{h}_{d,\alpha}(\alpha) = d \hat{h}_{d,\alpha}(0)>0$.

Suppose, towards a contradiction, that $(x_n)_{n\geq 1}$ is a sequence of distinct
elements in $\kbar$ which are $S$-integral relative to $(\alpha)$
and which are PCF parameters for the family $f_{d,c}$.
Then by Lemma~\ref{lem:locht} and Theorem~\ref{thm:logequi}, we have
\begin{align}
\label{eq:mainineq1}
0 < \hat h_{d,\alpha}(\alpha)
&= \frac{1}{ [k : \QQ] } \sum_{ v \in M_k } N_v \hat{\lambda}_{d,\alpha,v} (\alpha)
= \sum_{v \in M_k} \frac{N_v}{ [k : \QQ] } 
\int_{\PBerkv} \log | x - \alpha |_v \, d\mu_{d, v} (x)
\notag \\
&= \sum_{ v \in M_k}   \lim_{n \to \infty}  
\frac{N_v}{ [ k(x_n) : \QQ ] } \sum_{ \sigma} \log \big| x_n^{\sigma} - \alpha \big|_v.
\end{align}
Because each $x_n$ is $S$-integral relative to $(\alpha)$,
and because $|x_n^{\sigma}|_v \leq 1$  and $|\alpha|_v\leq 1$ for every 
$v\in M_k\smallsetminus S$ and every $\sigma$,
we have
\begin{equation}
\label{eq:trivval}
\log \big| x_n^{\sigma} - \alpha \big|_v = 0
\quad \text{for every } n\geq 1, \text{ every } \sigma, \text{ and every } v\in M_k\smallsetminus S.
\end{equation}
In particular, the inner sum in expression~\eqref{eq:mainineq1}
is zero for all $v\in M_k\smallsetminus S$.
Thus, the inequality of~\eqref{eq:mainineq1} becomes
\begin{align*}
0 & < \sum_{ v \in S}   \lim_{n \to \infty}
\frac{N_v}{ [ k(x_n) : \QQ ] } \sum_{ \sigma} \log \big| x_n^{\sigma} - \alpha \big|_v
= \lim_{n \to \infty} \frac{1}{ [ k(x_n) : \QQ ] }  \sum_{ v \in S}
\sum_{ \sigma} N_v \log \big| x_n^{\sigma} - \alpha \big|_v
\\
&= \lim_{n \to \infty} \frac{1}{ [ k(x_n) : \QQ ] } \sum_{ v \in M_k}
\sum_{ \sigma} N_v \log \big| x_n^{\sigma} - \alpha \big|_v
=\lim_{n\to\infty} 0 = 0,
\end{align*}
where the second equality is by equation~\eqref{eq:trivval} again,
and the third is by the product formula for the field $k(x_n)$.
Thus, we have $0<0$, yielding the desired contradiction.
\end{proof}

\section{Accumulation at PCF parameters}
\label{sec:accumpcf}

Theorem~\ref{thm:pcffin} requires that the parameter $\alpha$ not be PCF.
The following result shows that this hypothesis cannot be removed.

\begin{thm}
\label{thm:pcfinf}
Let $k$ be a number field with algebraic closure $\kbar$,
let $S\subseteq M_k$ be a finite set of places of $k$
including all the archimedean places, let $d\geq 2$ be an integer,
and for any $c\in\kbar$, let $f_{d,c}(z):=z^d+c$.
Let $r\geq 1$, and let $\alpha_1,\ldots,\alpha_r\in k$ 
be parameters such that each map $f_{d,\alpha_i}$ is PCF.
Then there are infinitely many parameters $c\in\kbar$
that are $S$-integral relative to the divisor $D:=(\alpha_1) + \cdots + (\alpha_r)$
and for which $f_{d,c}$ is PCF.
\end{thm}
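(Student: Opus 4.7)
\emph{Plan of proof.} The idea is to realize the sought-after $S$-integral PCF parameters as roots of the explicit $\ZZ[c]$-polynomials $F_n(c):=f_{d,c}^n(0)$ (monic of degree $d^{n-1}$), and to control their $(\alpha_i,S)$-integrality by the size of the single value $F_n(\alpha_i)$ via a norm-product argument. The roots of $F_n$ in $\kbar$ are exactly those parameters $c$ for which $0$ is periodic of period dividing $n$ under $f_{d,c}$, and in particular every such $c$ is a PCF parameter.

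\emph{Setup.} First I replace $k$ by a finite extension so that all $\alpha_i\in k$. For each $i$, $f_{d,\alpha_i}$ being PCF means that the orbit $O_i:=\{f_{d,\alpha_i}^n(0):n\ge 0\}$ is a finite set of algebraic integers in $k$, and $F_n(\alpha_i)\in O_i$. I then enlarge $S$ finitely so that every non-zero element of $\bigcup_i O_i\cup\{\alpha_1,\dots,\alpha_r\}$ is a $v$-adic unit at every $v\notin S$; this does not weaken the statement.

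\emph{Choice of $n$.} Let $P$ be the maximum over all $i$ for which $0$ is periodic under $f_{d,\alpha_i}$ of the corresponding period $p_i$ (or $P=1$ if no such $i$ exists). I restrict attention to primes $n>P$. For any such $n$ and any $i$ with $\alpha_i\ne 0$: either $0$ is strictly preperiodic under $f_{d,\alpha_i}$, or $0$ is periodic of period $p_i$ with $p_i\nmid n$, because $n$ is prime and $n>p_i\ge 2$. In either case $F_n(\alpha_i)$ is a non-zero element of $O_i$, so $|F_n(\alpha_i)|_v=1$ for every $v\notin S$. In the degenerate case that $0$ itself appears among the $\alpha_i$, I instead work with $\tilde F_n(c):=F_n(c)/c\in\ZZ[c]$, which a short induction on the recursion $F_n(c)=F_{n-1}(c)^d+c$ shows to be monic with $\tilde F_n(0)=1$; the quotient $\tilde F_n(\alpha_j)=F_n(\alpha_j)/\alpha_j$ continues to satisfy $|\tilde F_n(\alpha_j)|_v=1$ at $v\notin S$ for every $\alpha_j\ne 0$.

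\emph{Propagation to the roots, and conclusion.} Fix $v\notin S$ and an extension $w$ of $|\cdot|_v$ to $\kbar$. Writing $F_n(c)=\prod_j(c-c_j)$ over $\kbar$, each $c_j$ is an algebraic integer, so $|\alpha_i-c_j|_w\le 1$ for every $j$; the identity
\[ 1 \;=\; |F_n(\alpha_i)|_v \;=\; \prod_j |\alpha_i-c_j|_w \]
then forces every factor to equal $1$. Because the multiset $\{c_j\}$ is $\Gal(\kbar/k)$-stable, the same conclusion holds for every extension of $v$ to $\kbar$, and therefore each root $c_j$ is $(\alpha_i,S)$-integral; the identical argument applies to $\tilde F_n$ in the degenerate case. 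Finally, as $n$ ranges over primes $n>P$, the roots of $F_n$ (respectively $\tilde F_n$) other than $0$ correspond to parameters under which $0$ has period exactly $n$, so roots coming from distinct primes are disjoint. Since $\deg F_n\to\infty$, this produces infinitely many distinct PCF parameters $c\in\kbar$ that are $S$-integral relative to $D$.

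\emph{Expected obstacle.} There is no analytic difficulty; the only bookkeeping subtlety is the degenerate case $0\in\{\alpha_1,\dots,\alpha_r\}$, in which $F_n(0)=0$ for every $n$ and one must pass to $\tilde F_n=F_n/c$ before running the norm-product argument.
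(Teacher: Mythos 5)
Your opening move is the right one --- for prime $n$, the nonzero roots of $F_n(c)=f_{d,c}^n(0)$ are exactly the roots of the Gleason polynomial $\Phi_{d,n}$, which is also what the paper uses --- but the norm-product step has a genuine gap, and the attempted repair by enlarging $S$ proves a strictly weaker statement. The theorem quantifies over \emph{every} finite $S$ containing the archimedean places (the paper's Remark after the proof emphasizes that $S$ may be taken to consist of the archimedean places alone), whereas $(D,S')$-integrality for an enlarged $S'\supsetneq S$ does not imply $(D,S)$-integrality; a point may fail to be integral precisely at a place of $S'\smallsetminus S$. Concretely, take $d=2$, $\alpha_1=-2$, and $v\mid 2$. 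The orbit $O_1=\{0,-2,2\}$ has nonzero elements that are not $2$-adic units, so $|F_n(-2)|_v=|\pm 2|_v=1/2<1$ for all $n\ge 1$. The equality $\prod_j|\alpha_1-c_j|_w=|F_n(\alpha_1)|_v$ then does \emph{not} force every factor to be $1$; indeed $c=0$ is a root of $F_n$ and $|0-(-2)|_v=1/2<1$, so $c=0$ fails to be $((-2),S)$-integral when $2\nmid p$ for all $p$ with places in $S$. Your passing to $\tilde F_n=F_n/c$ only when $0\in\{\alpha_i\}$ does not address this: the problematic root $c=0$ of $F_n$ is present regardless, and more importantly the value $|F_n(\alpha_i)|_v$ (or $|\tilde F_n(\alpha_i)|_v$) simply need not equal $1$ at the bad places unless you have already enlarged $S$.

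What you would need, and cannot obtain by the global norm computation alone, is the statement that $|\Phi_{d,n}(\alpha_i)|_v=1$ at \emph{every} non-archimedean $v$, which is essentially equivalent to the theorem itself with $S$ equal to the archimedean places. The paper proves precisely this local fact by a disk-dynamics argument: at each non-archimedean $v$, both $f_{d,\alpha_i}$ and $f_{d,\beta}$ have explicit good reduction; if $|\alpha_i-\beta|_v<1$ then the open unit disks around the critical orbits of the two maps coincide, i.e., $D(f^m_{d,\alpha_i}(0),1)=D(f^m_{d,\beta}(0),1)$ for all $m$; and the uniqueness statement for periodic cycles in a chain of residue disks under a polynomial of good reduction (Theorem 4.18(b) of \cite{BenBook}) then produces a period mismatch $n\ne n_i$ and hence a contradiction. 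That local ingredient is what is missing from your argument.
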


\begin{proof}
For each $i=1,\ldots,r$, there are minimal integers $m_i\geq 0$ and $n_i\geq 1$
such that
\begin{equation}
\label{eq:preper}
f_{d,\alpha_i}^{m_i + n_i} (0) = f_{d,\alpha_i}^{m_i} (0).
\end{equation}
That is, $n_i$ is the minimal period of the periodic cycle that the critical point $z=0$
of $f_{d,\alpha_i}$ ultimately lands on,
and $m_i$ is the length of the strictly preperiodic tail of the forward orbit of $z=0$.

Let $n\geq 1$ be any positive integer different from each of $n_1,\ldots,n_r$.
Define
\[ \Phi_{d,n}(c) := \prod_{\ell|n} f_{d,c}^\ell(0)^{\mu(n/\ell)} \in \ZZ[c], \]
where $\mu$ is the M\"{o}bius function. The degree of $\Phi_{d,n}$
is $\sum_{\ell|n} \mu(n/\ell) d^{\ell-1} > 0$, and hence $\Phi_{d,n}$ has (at least one)
root $\beta\in\kbar$.
By \cite[Theorem~1.1]{HT;15}, the critical point $z=0$ is periodic
with minimal period exactly $n$ under the map $f_{d,\beta}$.
(See also \cite[Section~2]{Buff;18}. The polynomials $\Phi_{d,n}$ are called
\emph{Gleason polynomials}. They have simple roots, which are precisely the parameters
for which the critical point $z=0$ is periodic of minimal period $n$.)
Since there are infinitely many choices of such integers $n$, it suffices to show that
for each such $n$, all the roots $\beta\in\kbar$ of $\Phi_{d,n}$ are $S$-integral 
relative to $D$.

Consider such $n$ and $\beta$. For each $i=1,\ldots,r$,
each place $v\in M_k\smallsetminus S$, and all $k$-embeddings
$\sigma:k(\alpha_i)\hookrightarrow\Cv$ and $\tau:k(\beta)\hookrightarrow\Cv$,
we abuse notation and write $\alpha_i$ for $\sigma(\alpha_i)$
and $\beta$ for $\tau(\beta)$.
Because $v$ is non-archimedean, and because 
the orbit of $z=0$ is preperiodic under both
$f_{d,\alpha_i}(z)=z^d+\alpha_i$ and $f_{d,\beta}(z)=z^d+\beta$,
it follows that $|\alpha_i|_v\leq 1$ and $|\beta|_v \leq 1$,
as noted in the discussion preceding Lemma~\ref{lem:locht}.

Thus, both $f_{d,\alpha_i}$ and $f_{d,\beta}$ have explicit good reduction;
see \cite[Section~4.3]{BenBook}, especially Proposition~4.10.(a).
In particular, by \cite[Proposition~4.19]{BenBook},
for any point $y$ in the closed unit disk $\Dbar(0,1)\subseteq\Cv$, the image
of the open disk $D(y,1)$ under $f_{d,\alpha_i}$ is $D(f_{d,\alpha_i}(y),1)$,
and similarly for $f_{d,\beta}$.
Suppose, towards a contradiction, that $|\alpha_i-\beta|_v < 1$
for some $1 \leq i \leq r$. Then by a simple
induction, it follows that $D(f^m_{d,\alpha_i}(0),1) = D(f^m_{d,\beta}(0),1)$ for every $m\geq 0$.

Because of the critical point at $z=0$, the map
$f^m_{d,\alpha_i}:D(0,1)\to D(f^m_{d,\alpha_i}(0),1)$ is not one-to-one,
and similarly for $f_{d,\beta}$.
Since $f^n_{d,\beta}(0)=0$, we have $f^n_{d,\beta}(D(0,1))=D(0,1)$.
If there were some $1\leq m\leq n-1$ such that $f^m_{d,\beta}(D(0,1))=D(0,1)$,
then by \cite[Theorem~4.18.(b)]{BenBook},
the map $f_{d,\beta}$ would only have one periodic point in $D(0,1)$,
and that point would have period $m$; but this contradicts the fact that $z=0$ has
minimal period $n>m$ under $f_{d,\beta}$. Thus, the $n$ disks
\[ D\big( f^j_{d,\alpha_i}(0), 1\big) = D\big( f^j_{d,\beta}(0), 1\big), \quad j=0,1,\ldots,n-1 \]
are distinct and hence disjoint. By \cite[Theorem~4.18.(b)]{BenBook} again, 
this time applied to $f_{d,\alpha_i}$,
there is a unique periodic cycle of $f_{d,\alpha_i}$ in these disks, and it has minimal period $n$.
However, the point $f^{m_i}_{d,\alpha_i}(0)$ lies in one of these disks
and is periodic of minimal period $n_i\neq n$.
By this contradiction, we must have $|\alpha_i-\beta|_v = 1$
for all $1\leq i\leq r$.
\end{proof}

\begin{remark}
The only fact about the finite set $S$ of places
used in proof of Theorem~\ref{thm:pcfinf} is that
$S$ contains all the archimedean places. In particular,
the proof works just fine even if $S$ consists \emph{only} of the archimedean places.
This is possible because the parameters $\beta$
constructed in the proof are roots of Gleason polynomials, so that the critical point
of $f_{d,\beta}$ is periodic.

One might ask whether it is possible to choose the parameters $\beta\in\kbar$
so that the critical point of $f_{d,\beta}$ is \emph{strictly}
preperiodic, i.e., so that each $\beta$ is a Misiurewicz parameter.
The answer is yes, at least if we assume that the finite set $S$ includes
not only all archimedean places of $k$,
but also all non-archimedean places $v$ dividing the degree $d$.
(Still, as with the Gleason case, the set $S$ can be chosen independent of the divisor $D$.)

To see this,
observe that for each Misiurewicz parameter $\beta$,
there are integers $m\geq 2$ and $n\geq 1$
such that $f_{d,\beta}^m(0)=f_{d,\beta}^{m+n}(0)$
but no simpler orbit relations hold.
Conversely, by a degree-counting argument similar to the one we made
for Gleason polynomials, for any such $m$ and $n$,
there are Misiurewicz parameters $\beta$ satisfying this condition.
Thus, we have $f_{d,\beta}^{m-1}(0)=\zeta f_{d,\beta}^{m+n-1}(0)$
for some $d$-th root of unity $\zeta$ with $\zeta\neq 1$.

With notation as in the proof of Theorem~\ref{thm:pcfinf}, let $m\geq 2$ be an integer
different from each of $m_1,\ldots,m_r$, let $n\geq 1$ be any positive integer,
and let $\beta\in \kbar$ be a (Misiurewicz) parameter satisfying
$f_{d,\beta}^{m-1}(0)=\zeta f_{d,\beta}^{m+n-1}(0)$
for some $d$-th root of unity $\zeta$ with $\zeta\neq 1$.
(Clearly there are infinitely many choices of such $m$ and $n$, and hence
infinitely many choices of such $\beta$.)
As in the proof, if we suppose that $|\alpha_i-\beta|_v < 1$ for some
$v\in M_k\smallsetminus S$ and some $1 \leq i \leq r$,
then $D(f^\ell_{d,\alpha_i}(0),1) = D(f^\ell_{d,\beta}(0),1)$ for every $\ell\geq 0$.

Let $R=|f_{d,\beta}^{m-1}(0)|_v$; then $R>0$ because $\beta$ is Misiurewicz.
Since $v\nmid d$, we have $|\zeta-1|_v=1$,
and hence the three points $0$, $f_{d,\beta}^{m-1}(0)$,
and $f_{d,\beta}^{m+n-1}(0)=\zeta^{-1}f_{d,\beta}^{m-1}(0)$
each have $v$-adic distance $R$ from one another. If $R<1$, then
the disk $D(0,1)$ would map into itself under $f_{d,\beta}^n$, and since this map is
not one-to-one, the $n$-periodic point $f_{d,\beta}^{m+n-1}(0)$ would be attracting.
Moreover, because of the attracting periodic point, $D(0,1)$ maps into itself under $f_{d,\beta}^n$
but under no smaller iterate of $f_{d,\beta}$. Since $D(0,1)$ also maps into itself under $m-1$ iterations,
we must therefore have $n|(m-1)$.
But then, again because of the attracting periodic point,
the distance between $0$ and $f_{d,\beta}^{m+n-1}(0)$ would be strictly
greater than the distance between $f_{d,\beta}^{m-1}(0)$ and $f_{d,\beta}^{m+n-1}(0)$,
a contradiction. Hence, we must have $R=1$.

Recall that $m\neq m_i$.
We may assume without loss that $m>m_i$.
Indeed, if $m_i>m$, then we may reverse the roles of $\alpha_i$
and $\beta$ in what follows, because in that case, the PCF parameter $\alpha_i$
must be Misiurewicz rather than Gleason.

Define
\[ U:=D\big(f_{d,\alpha_i}^{m-1}(0),1\big)=D\big(f_{d,\beta}^{m-1}(0),1\big) \]
and
\[ V:=D\big(f_{d,\alpha_i}^{m+n-1}(0),1\big)=D\big(f_{d,\beta}^{m+n-1}(0),1\big). \]
Then the disks $U$ and $V$ are distinct, since $R=1$.
On the other hand, because $f_{d,\beta}^{m}(0)=f_{d,\beta}^{m+n}(0)$,
both $U$ and $V$ map to the disk
\[ W:=D\big(f_{d,\alpha_i}^{m}(0),1\big)=D\big(f_{d,\beta}^{m}(0),1\big) \]
under $f_{d,\beta}$ and hence also under $f_{d,\alpha_i}$.
Because the point $f_{d,\alpha_i}^{m_i}(0)$ is periodic under $f_{d,\alpha_i}$,
it follows that $f_{d,\alpha_i}^{m-1}(0)$ is also periodic, since $m_i<m$.
Therefore, the distinct disks $U$ and $V$
are both part of a periodic cycle of disks under $f_{d,\alpha_i}$.
However, two distinct elements of a periodic cycle cannot have
the same image, but $U$ and $V$ both map to $W$ under $f_{d,\alpha_i}$.
By this contradiction, we see that $|\alpha_i-\beta|_v = 1$, as desired.

For more on Gleason and Misiurewicz polynomials, see, for example, \cite{Buff;18,HT;15}.
\end{remark}

\section{Conjectural generalizations}
\label{sec:conj}

Fix $d\geq 2$, and let $\Rat_d$ denote the space of rational 
functions $f:\PP^1\to\PP^1$
of degree $d$, which is an affine variety naturally identified with a Zariski open subset
of $\PP^{2d+1}$. 
Following \cite[Section~4.4]{Sil;ADS}, the moduli space $\calM_d$ is the quotient space
of $\Rat_d$ by the conjugation action of $\PGL_2$.
We recall the following definition from \cite[Section~1.4]{BD;13}.

\begin{defin}
\label{def:algfam}
Let $k$ be a number field with algebraic closure $\kbar$.
An \emph{algebraic family of critically marked rational maps of degree $d$} over $k$
is a quasiprojective variety $X$ equipped with
\begin{itemize}
\item a regular map $x\mapsto f_x$ from $X$ to $\Rat_d$, and
\item for each $i=1,\ldots,2d-2$, a regular map $c_i:X\to\PP^1$,
\end{itemize}
all defined over $k$, such that for each $x\in X(\kbar)$, the critical points of $f_x$,
listed with multiplicity, are $c_1(x),\ldots,c_{2d-2}(x)$.
If the image of $X$ under the composition
$X\to\Rat_d \twoheadrightarrow \calM_d$ has dimension $N\geq 0$,
we say that $X$ is an
\emph{$N$-dimensional algebraic family of critically marked rational maps of degree $d$}.
\end{defin}

Given an algebraic family $(X,f_x,c_1,\ldots,c_{2d-2})$ as in Definition~\ref{def:algfam},
let $K:=\kbar(X)$ be the function field of $X$.
Then the family defines a
rational function $\mathbf{f}(z)\in K(z)$ of degree $d$, with critical points $c_i\in\PP^1(K)$, for $i=1,\ldots,2d-2$.
Still following \cite{BD;13}, along with \cite[Section~6]{Dem;16},
for $n\geq 1$, we say an $n$-tuple $(c_{i_1},\ldots,c_{i_n})$ of these marked
critical points is \emph{dynamically dependent} if there is a (possibly reducible)
closed subvariety $Y$ of $(\PP^1)^n$ defined over $K$ such that
\begin{itemize}
\item $(c_{i_1},\ldots,c_{i_n})$ lies on $Y$,
\item $\mathbf{F}(Y)\subseteq Y$, where $\mathbf{F}:=(\mathbf{f},\ldots,\mathbf{f})$,
\item There is some $j\in\{1,\ldots,n\}$ and a nonempty Zariski open subset $X'\subseteq X$
with the following property. Consider the projection map $\pi_j:(\PP^1)^n\to (\PP^1)^{n-1}$
that deletes the $j$-th coordinate. Then for all $x\in X'$,
the restriction of $\pi_j$ to the specialization $Y_x$ is finite.
\end{itemize}
(In \cite{Dem;16}, DeMarco calls such a subvariety $Y$ a \emph{dynamical relation},
and adds the third condition, which did not appear in \cite{BD;13},
in order to disallow families with certain degenerations.)
If there is no such dynamical dependence, then we say that the $n$ critical points
$c_{i_1},\ldots,c_{i_n}$ are \emph{dynamically independent}.

For example, if $c_1$ is persistently preperiodic, meaning that there are integers
$s>r\geq 0$ so that $\mathbf{f}^r(c_1)=\mathbf{f}^s(c_1)$,
then the one-tuple $(c_1)$ is dynamically related, with the subvariety $Y\subseteq\PP^1$ consisting of the
(finitely many) points in the forward orbit of $c_1$ under $\mathbf{f}$.
Similarly, if $c_1,c_2$ satisfy $\mathbf{f}^r(c_1)=\mathbf{f}^s(c_2)$ for some $r,s\geq 0$,
then the pair $(c_1,c_2)$ is dynamically related,
with $Y\subseteq\PP^1\times\PP^1$ defined by the equation $\mathbf{f}^r(y_1)=\mathbf{f}^s(y_2)$.


Inspired by Baker and DeMarco's conjecture on Zariski density of PCF points
in dynamical moduli spaces \cite[Conjecture~1.10]{BD;13},
as well as by DeMarco's related conjecture in \cite[Conjecture~6.1]{Dem;16},
we propose the following generalization of our earlier Conjecture~\ref{conj:pcffin}.

\begin{conj}
\label{conj:pcfnondense}
Let $k$ be a number field, let $S$ be a finite set of places of $k$ including
all the archimedean places, and let $d\geq 2$ and $N\geq 1$.
Let $(X,f_x)$ be an $N$-dimensional algebraic family
of critically marked rational maps of degree $d$, defined over $k$.
Let $D$ be a nonzero effective divisor on $X$. Suppose that:
\begin{itemize}
\item the composition $X\to\Rat_d \twoheadrightarrow\calM_d$ is quasifinite,
\item $X$ has at most $N$ dynamically independent critical points, and
\item at least one irreducible component of $D$
has at least $N$ dynamically independent critical points.
\end{itemize}
Then the set
\begin{equation}
\label{eq:intPCF}
\{ x \in X ( \overline k ) : \textup{$f_x$ is PCF, and $x$ is $S$-integral on $X$ relative to $D$} \}
\end{equation}
is not Zariski dense in $X$.
\end{conj}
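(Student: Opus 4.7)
The plan is to mirror the proof of Theorem~\ref{thm:pcffin} in the higher-dimensional setting, replacing the one-parameter ingredients by their natural analogs. Write $K=\kbar(X)$ and let $\mathbf{f}\in K(z)$ be the generic rational function of the family, with critical points $c_1,\ldots,c_{2d-2}\in\PP^1(K)$. Each $c_i$ yields, via the Call--Silverman construction applied fiberwise, a canonical height function $\hat{h}_{\mathbf{f},c_i}:X(\kbar)\to[0,\infty)$ that measures the arithmetic escape rate of $c_i$ under iteration of $\mathbf{f}$, with local contributions $\hat{\lambda}_{\mathbf{f},c_i,v}$ at each place $v$. A specialization $f_x$ is PCF precisely when $\hat{h}_{\mathrm{crit}}(x):=\sum_i\hat{h}_{\mathbf{f},c_i}(x)=0$.

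Fix an irreducible component $Y$ of $D$ carrying $N$ dynamically independent critical points $c_{j_1},\ldots,c_{j_N}$. Dynamical independence, combined with Dujardin--Favre and Bassanelli--Berteloot pluripotential theory, should guarantee that the wedge $T_{j_1}\wedge\cdots\wedge T_{j_N}$ of bifurcation currents restricts to a nonzero positive measure $\mu_Y$ on $Y$, whose support is the higher-dimensional bifurcation locus. In particular, $\mu_Y$-generic points $\alpha\in Y(\kbar)$ satisfy $\hat{h}_{\mathrm{crit}}(\alpha)>0$. Suppose, for contradiction, that the set in~\eqref{eq:intPCF} is Zariski dense in $X$; after shrinking $X$ to a Zariski open subset and enlarging $k$ if necessary, one can extract a generic sequence $(x_n)_{n\geq 1}$ of PCF parameters in $Y(\kbar)$ that are $S$-integral relative to $D$. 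An arithmetic equidistribution theorem for PCF points in the spirit of Yuan and Yuan--Zhang, applied at each place $v$ on the Berkovich analytification $X_v^{\mathrm{an}}$, should then yield weak convergence of the atomic Galois-orbit measures $\nu_n$ to a $v$-adic analog of $\mu_Y$ supported on the $v$-adic bifurcation locus.

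Armed with this local equidistribution, one would decompose
\[
\hat{h}_{\mathrm{crit}}(\alpha)=\frac{1}{[k:\QQ]}\sum_{v\in M_k}N_v\,\hat{\lambda}_{\mathrm{crit},v}(\alpha)
\]
and, in analogy with Lemma~\ref{lem:locht}, express each $\hat{\lambda}_{\mathrm{crit},v}(\alpha)$ as an integral of a Green-type kernel (playing the role of $\log|x-\alpha|_v$) against $\mu_Y$. Approximating these integrals by the atomic averages along $(x_n)$, collapsing the outer $v$-sum to a sum over $v\in S$ by the $S$-integrality hypothesis, and invoking the product formula over $k(x_n)$, one would obtain $\hat{h}_{\mathrm{crit}}(\alpha)\leq 0$, contradicting the strict positivity secured in the previous step.

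The main obstacles are twofold. First, a full arithmetic equidistribution theorem for PCF parameters in higher-dimensional moduli spaces is not known in the generality required: results of Favre--Gauthier, Gauthier, Ghioca--Nguyen--Ye, and Yuan--Zhang handle various one- or low-dimensional cases, but the $N$-dimensional statement needed here would demand an adelic semipositive metric on $X$ whose curvature realizes the bifurcation current, and verifying the needed nef and integrability properties is precisely where the dynamical independence hypothesis should enter. Second, and more seriously, the Green-type kernel is discontinuous at $\alpha$, so one needs a higher-dimensional non-archimedean accumulation bound generalizing \cite[Theorem~1.4]{BI;20;1} to guarantee that PCF parameters do not $v$-adically cluster at a $\mu_Y$-generic $\alpha\in Y$, together with an archimedean avoidance hypothesis replacing the ``$\alpha\notin\partial\mathbf{M}_{d,v}$'' condition of Theorem~\ref{thm:pcffin}. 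Both appear substantially harder in the several-variable setting and are presumably the reason the statement is framed as a conjecture rather than a theorem.
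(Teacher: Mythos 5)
This statement is Conjecture~\ref{conj:pcfnondense}; the paper offers no proof of it, so there is no argument of the authors' to compare yours against. You have recognized this, and your outline is a sensible attempt to transport the proof of Theorem~\ref{thm:pcffin} to the $N$-dimensional setting, with a correct diagnosis of the two principal missing inputs: an arithmetic equidistribution theorem for PCF parameters against the bifurcation measure on higher-dimensional moduli, and a higher-dimensional analog of the non-archimedean accumulation bound of \cite[Theorem~1.4]{BI;20;1} (together with an archimedean avoidance condition) to control the discontinuity of the Green-type kernel. These are indeed the obstructions that make the statement a conjecture rather than a theorem.

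There is, however, a structural issue in your sketch that would persist even if those two inputs were available. In Theorem~\ref{thm:pcffin} the divisor is a single point $(\alpha)$, so the positive global quantity driving the contradiction, $\hat h_{d,\alpha}(\alpha)$, is literally the same object that decomposes into local integrals of $\log|x-\alpha|_v$ against $\mu_{d,v}$. When $D$ is a divisor of dimension $N-1\geq 1$, the $S$-integrality hypothesis is relative to \emph{all} of $D$, so the local kernel must be a Green's function $g_{D,v}$ for the whole divisor, not the function $\log|x-\alpha|_v$ attached to one chosen point $\alpha\in Y$. Fixing a ``$\mu_Y$-generic'' $\alpha$ and evaluating $\hat h_{\mathrm{crit}}(\alpha)$ therefore does not organize the computation correctly; the quantity that should play the role of $\hat h_{d,\alpha}(\alpha)>0$ is a height pairing between the divisor class of $D$ and the adelic bifurcation metric, and it is the strict positivity of that pairing that the ``$\geq N$ dynamically independent critical points on a component of $D$'' hypothesis is designed to secure. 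Relatedly, your $\mu_Y$ is ill-defined as stated: $T_{j_1}\wedge\cdots\wedge T_{j_N}$ is a top-degree measure on the $N$-dimensional $X$, and its restriction to the $(N-1)$-dimensional $Y$ is generically zero; what one would actually want is a wedge of $N-1$ bifurcation currents intersected with $[Y]$, or, better, the global pairing just described. You also conflate the loci at one point, speaking of extracting the PCF sequence $(x_n)$ from $Y(\kbar)$, when in fact the PCF parameters live in $X(\kbar)$ and must equidistribute against a measure on $X$, while $D$ is the divisor against which one integrates. None of this undermines the broad plausibility of your plan, but it shows that the one-variable intuition of ``pick a non-PCF $\alpha$ and evaluate its height'' does not transport cleanly once $D$ has positive dimension, and that the correct framing is arithmetic-intersection-theoretic from the outset.
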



For example, our family $f_{d,c}(z)=z^d+c$ has parameter $c$ lying in $X=\AAA^1$,
and the largest possible set of dynamically independent critical points has cardinality $1$,
consisting only of the critical point at $z=0$. If $D=(\alpha_1) + \cdots + (\alpha_r)$
with $f_{d,\alpha_1}$ not PCF, then the irreducible component $(c=\alpha_1)$
has a dynamically independent critical point, since the critical point $z=0$ is not preperiodic
under $f_{d,\alpha_1}$. Thus, Conjecture~\ref{conj:pcffin}
implies Conjecture~\ref{conj:pcfnondense} 
for this case, and the two are precisely the same if $D=(\alpha_1)$.

On the other hand, in light of Theorem \ref{thm:pcfinf}, the set of
equation~\eqref{eq:intPCF} for the family $f_{d,c}$
\emph{is} Zariski dense in $X=\AAA^1$ when $D=(\alpha)$ and when $f_{d,\alpha}$ is PCF.
More generally, in the notation of Conjecture~\ref{conj:pcfnondense},
if no irreducible component of $D$ has $N$ (or more) dynamically independent critical points,
then we expect, possibly after enlarging the finite set $S$,
that the set of equation~\eqref{eq:intPCF} is Zariski dense in $X$.


We note that the hypotheses of Conjecture~\ref{conj:pcfnondense} exclude
the case that $X$ is the flexible Latt\`{e}s locus in $\calM_d$ (when $d$ is a square),
since in that case $N=1$, but any component of any divisor $D$ of $X$ would correspond
to a Latt\`{e}s and hence PCF map, and therefore would have
no dynamically independent critical points.


\begin{remark}
As in \cite{BD;13}, the dimension of the \emph{variety} $X$ in Definition~\ref{def:algfam}
might be strictly larger than the dimension $N$ of the \emph{family}.
However, in practice, the map $X\to\calM_d$ is usually quasifinite, which implies that
both $X$ and its image in $\calM_d$ have the same dimension.
For example, our family $c\mapsto f_{d,c}(z)=z^d+c$ has $X=\AAA^1$, and the image in $\calM_d$
also has dimension $1$, since $z^d+a$ is conjugate to $z^d+b$ if and only if $b=\zeta a$
for some $(d-1)$-st root of unity $\zeta$.
Thus, the hypothesis in Conjecture~\ref{conj:pcfnondense} that $X\to\calM_d$ is quasifinite,
which we assume so that both $X$ and the divisor $D$ behave well under this map,
already applies to almost all families of interest.
\end{remark}

For any integer $d\geq 2$, one may define the moduli space $\overline{\calM}_d^1[\calP]$
of critically marked rational functions of degree $d$, up to conjugation, as a geometric quotient scheme;
see \cite[Section~10.1]{DS;18}. That is, each point of $\overline{\calM}_d^1[\calP]$
corresponds to a conjugacy class of tuples $(f,c_1,\ldots,c_{2d-2})$, where $f$ is a rational function
of degree $d$ whose critical points in $\PP^1$ are $c_1,\ldots,c_{2d-2}$.
(In the terminology of \cite[Sections~9--10]{DS;18}, the critical portrait $\calP$ here consists solely of the $2d-2$
marked critical points, but with no restrictions on their orbits. We use $\overline{\calM}_d^1[\calP]$
instead of $\calM_d^1[\calP]$ to allow two or more critical points to coincide in a higher-multiplicity
critical point, while still ensuring that $f$ does not degenerate to a map of lower degree.)
The following conjecture is essentially Conjecture~\ref{conj:pcfnondense}
applied to this geometric moduli space.

\begin{conj}
\label{conj:geompcfnondense}
Let $k$ be a number field, let $S$ be a finite set of places of $k$ including
all the archimedean places, and let $d\geq 2$.
Let $X$ be a closed subvariety of $\overline{\calM}_d^1[\calP]$, defined over $k$,
which has at most $\dim X$ dynamically independent critical points.
Suppose that $D$ is a nonzero effective divisor on $X$
at least one of whose irreducible components has at least $\dim X$ 
dynamically independent critical points.
Then the set
\begin{equation}
\label{eq:intPCF2}
\{ x \in X ( \overline k ) : \textup{$x$ is PCF and $S$-integral on $X$ relative to $D$} \}
\end{equation}
is not Zariski dense in $X$, where $x$ being PCF means that $x$ corresponds to
a PCF rational map.
\end{conj}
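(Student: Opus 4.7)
The plan is to argue by contradiction, emulating the proof of Theorem~\ref{thm:pcffin} in higher dimension. Set $N := \dim X$ and suppose, toward a contradiction, that the set in \eqref{eq:intPCF2} is Zariski dense in $X$. Fix an irreducible component $D_0 \subseteq D$ carrying $N$ dynamically independent marked critical points $c_{i_1},\ldots,c_{i_N}$. After enlarging $k$ and $S$ if necessary, I may assume $X$, $D_0$, and the marked critical points are all defined over $k$.

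First, at each place $v \in M_k$, I would construct a bifurcation measure $\mu_{X,v}$ on the Berkovich analytification $X_v^{\Ber}$ (the complex analytification for archimedean $v$) as the wedge product of the bifurcation $(1,1)$-currents $T_{i_j,v}$ associated to each dynamically independent $c_{i_j}$. These currents are the Laplacians of the fiberwise escape rates $x \mapsto \hat{\lambda}_{f_x,v}(c_{i_j}(x))$; they have been built in the complex case by DeMarco and Bassanelli--Berteloot, and in the Berkovich case by Favre--Gauthier and Gauthier--Okuyama--Vigny \cite{FavGau;arxiv,GKNY;17}. Dynamical independence of $c_{i_1},\ldots,c_{i_N}$ should force $\mu_{X,v} := T_{i_1,v} \wedge \cdots \wedge T_{i_N,v}$ to be a nontrivial positive measure. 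A Zariski dense sequence of PCF points has zero height with respect to the adelic line bundle naturally encoding $\sum_j \hat{h}_{\mathbf{f}}(c_{i_j})$, so by Yuan's arithmetic equidistribution theorem \cite{Yuan;08} (together with its Berkovich extensions), their Galois orbits equidistribute to $\mu_{X,v}$ at each $v$.

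Next, pick a generic closed point $\alpha \in D_0(\kbar)$. Because $\dim D_0 = N - 1$ but $D_0$ still carries $N$ dynamically independent marked critical points, $\mathbf{f}|_{D_0}$ cannot be PCF, and hence $\hat{h}_{f_\alpha}(c_{i_j}(\alpha)) > 0$ for at least one $j$. A higher-dimensional analog of Lemma~\ref{lem:locht} would then rewrite this positive global height as a sum over places of integrals against $\mu_{X,v}$ of a kernel with logarithmic singularity along $D_0$. Granting the key analytic input described below, the argument closes exactly as in Section~\ref{sec:finite}: $((D),S)$-integrality of the hypothetical Zariski dense sequence wipes out all local contributions at $v \notin S$, and the product formula for each $k(x_n)$ then forces the sum at places in $S$ to tend to $0$, contradicting strict positivity of $\hat{h}_{f_\alpha}(c_{i_j}(\alpha))$.

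The hardest step, which I expect to be the main obstacle, is the required higher-dimensional analog of Theorem~\ref{thm:logequi}: one must show that for Galois orbits of PCF parameters in $X$, integration of the kernel with logarithmic singularity along $D_0$ against the atomic measures converges to integration against $\mu_{X,v}$. At non-archimedean $v$, this should follow from a subvariety analog of \cite[Theorem~1.4]{BI;20;1}, namely that PCF parameters of the family $\mathbf{f}$ cannot $v$-adically accumulate on a non-persistently-preperiodic subvariety $D_0$; such an accumulation statement does not yet appear to be in the literature in the generality needed here, and establishing it seems to be the central new analytic input. At archimedean $v$, the cutoff construction used in the proof of Theorem~\ref{thm:logequi} strongly suggests that one must impose a Fatou-type transversality hypothesis on $D_0$ relative to the archimedean bifurcation locus (generalizing $\alpha \notin \partial \mathbf{M}_{d,v}$), under which a neighborhood of $D_0$ can be excised from the support of $\mu_{X,v}$ and the log singularity can be replaced by a continuous compactly supported cutoff against which equidistribution applies directly.
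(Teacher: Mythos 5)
The statement in question is presented in the paper as a \emph{conjecture} (Conjecture~\ref{conj:geompcfnondense}); the paper offers no proof of it, so there is no argument to compare yours against. What follows is therefore an assessment of your proposal on its own terms.

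Your outline is a sensible attempt to transpose the strategy behind Theorem~\ref{thm:pcffin} to higher-dimensional moduli, and you are candid about two of the missing ingredients: a subvariety analogue of \cite[Theorem~1.4]{BI;20;1} at non-archimedean places, and a Fatou-type transversality hypothesis at archimedean places. You should note that the second of these means that, even if everything else went through, the sketch would only yield a conditional statement parallel to Theorem~\ref{thm:pcffin}, not Conjecture~\ref{conj:geompcfnondense} as stated; the unconditional version would stand to Conjecture~\ref{conj:geompcfnondense} roughly as Conjecture~\ref{conj:pcffin} stands to Theorem~\ref{thm:pcffin}.

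There is, however, a further gap that you do not flag, and it sits at the center of the intended contradiction. You pick a generic closed point $\alpha \in D_0(\kbar)$, observe $\hat{h}_{f_\alpha}(c_{i_j}(\alpha))>0$, and assert that ``a higher-dimensional analog of Lemma~\ref{lem:locht} would then rewrite this positive global height as a sum over places of integrals against $\mu_{X,v}$ of a kernel with logarithmic singularity along $D_0$.'' This conflates two different quantities. Lemma~\ref{lem:locht} works precisely because the divisor is a single point $\{\alpha\}$: the kernel $\log|x-\alpha|_v$ is the potential of the Dirac mass at $\alpha$, and pairing it against $\mu_{d,v}=\Delta_v G_{d,v}$ returns $G_{d,v}(\alpha)$, i.e.\ a \emph{value} of the escape rate at $\alpha$. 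When $D_0$ has positive dimension, the Green's-function (Monge--Amp\`ere / B\'ezout) identity that pairs a kernel with log singularity along $D_0$ against $T_{i_1,v}\wedge\cdots\wedge T_{i_N,v}$ produces, after integration by parts, an \emph{integral over} $D_0$ of wedge products of lower order -- something like $\int_{D_0} G_{i_\ell,v}\, T_{i_1,v}\wedge\cdots\widehat{T_{i_\ell,v}}\cdots\wedge T_{i_N,v}$ -- not the value of a height at one generic $\alpha$. The argument would therefore need to be reorganized around proving that this integrated quantity, summed over places with the appropriate adelic normalization, is strictly positive, and it is not clear that ``$D_0$ carries $N$ dynamically independent critical points'' is the right hypothesis to guarantee that. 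Separately, the invocation of Yuan's theorem requires a \emph{generic} sequence (or net), not merely a Zariski-dense set; extracting a generic net from a Zariski-dense countable set is possible, but your phrasing elides this step, and you would also have to check that integrality relative to $D$ passes to the extracted net. In short: the roadmap is plausible and correctly identifies the paper's template, but both the non-archimedean accumulation input and the higher-dimensional height identity at the heart of the contradiction are genuinely open, and the latter is misstated in a way that would need to be corrected before the strategy could even be attempted.
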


Conversely, again in light of Theorem~\ref{thm:pcfinf}, if no irreducible component of the divisor $D$
in Conjecture~\ref{conj:geompcfnondense} has $\dim X$ dynamically independent
critical points, then we expect, possibly after enlarging the finite set $S$,
that the set~\eqref{eq:intPCF2} \emph{is} Zariski dense in $X$.

Returning to the family $f_{d,c}(z)=z^d+c$, we also propose the following
further integrality conjecture, inspired by the special case of the Dynamical
Andr\'{e}-Oort Conjecture proven in \cite[Theorem 1.1]{GKNY;17}.

\begin{conj}
\label{conj:a2}
Let $k$ be a number field, let $S$ be a finite set of places of $k$ including
all the archimedean places, and let $d\geq 2$ be an integer.
Write $f_{d,c}(z):=z^d+c$.
Let $D$ be a nonzero effective divisor on $\AAA^2$ such that at least one of
its irreducible components is \emph{not} of any of the following three forms:
\begin{enumerate}
\item $\{ c \} \times \AAA^1$, where $c\in\kbar$ and $f_{d,c}$ is PCF,
\item $ \AAA^1 \times \{ c \}$, where $c\in\kbar$ and $f_{d,c}$ is PCF,
\item the solution set of $x - \zeta y = 0$, where $\zeta$ is a $(d-1)$st root of unity.
\end{enumerate}
Then the set
\begin{equation}
\label{eq:a2PCF}
\bigg\{ P=(a,b) \in\AAA^2(\kbar) \bigg|
\begin{array}{l}
P \textup{ is } S\textup{-integral on } \AAA^2 \textup{ relative to } D,
\\
\hfil \textup{ and both } f_{d,a} \textup{ and} f_{d,b} \textup{ are PCF} \hfil
{}
\end{array}
\bigg\}
\end{equation}
is not Zariski dense in $\AAA^2$.
\end{conj}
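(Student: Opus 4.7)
The plan is to adapt the three-step argument behind Theorem~\ref{thm:pcffin}---local logarithmic equidistribution, decomposition of a positive arithmetic invariant as a sum of canonical local heights, and product formula---to the two-parameter family $(f_{d,a}, f_{d,b})$ on $\AAA^2$. After extending $k$ if necessary, I would first reduce to the case that $D = (C)$ is a single irreducible curve defined by a polynomial $F(x,y) \in k[x,y]$, with $C$ of none of the three exceptional forms. Suppose toward a contradiction that the set in equation~\eqref{eq:a2PCF} is Zariski dense in $\AAA^2$, and extract a Galois-generic sequence of distinct PCF pairs $P_n = (a_n, b_n) \in \AAA^2(\kbar)$ inside this set. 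At each place $v \in M_k$, form the atomic probability measure $\nu_n^{(v)}$ supported equally on the $\Gal(\kbar/k)$-conjugates of $P_n$, viewed on the product Berkovich space $X_v := \PBerkv \times \PBerkv$.

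The first step is a two-dimensional equidistribution: $\nu_n^{(v)}$ should converge weakly to the product bifurcation measure $\mu_{d,v} \otimes \mu_{d,v}$ on $X_v$. In the archimedean case this should follow from the equidistribution of PCF parameters in each coordinate combined with Zariski genericity of $(P_n)$; in the non-archimedean case it should follow from Yuan-type arithmetic equidistribution on $\AAA^2$, since each coordinate of $P_n$ is an algebraic integer of canonical height zero with respect to the one-variable bifurcation height. The second and main step, which I expect to be the principal obstacle, is the two-variable analog of Theorem~\ref{thm:logequi}: one must show
\begin{equation*}
\lim_{n\to\infty} \frac{1}{[k(P_n):k]} \sum_\sigma \log\big|F\big(P_n^\sigma\big)\big|_v
= \int_{X_v} \log|F(x,y)|_v \, d\big(\mu_{d,v}\otimes\mu_{d,v}\big)(x,y),
\end{equation*}
despite the discontinuity of the integrand along $C$. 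The hypothesis that $C$ is not one of the three exceptional curves---vertical PCF line, horizontal PCF line, or the line $x = \zeta y$ for $\zeta^{d-1}=1$ reflecting the conjugation symmetry $f_{d,c} \sim f_{d,\zeta^{-1}c}$---is exactly what should rule out pathological accumulation of the PCF pairs $P_n$ onto $C$ in the $v$-adic topology. For non-archimedean $v$ this appears to require a two-variable strengthening of \cite[Theorem~1.4]{BI;20;1}, and for archimedean $v$ an argument using dynamical stability in the product family away from the bifurcation locus along $C$, similar to the Misiurewicz/Fatou exclusion used in Case~1 of the proof of Theorem~\ref{thm:logequi}.

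Once the local logarithmic equidistribution is established, the conclusion follows by the same product-formula argument as in Section~\ref{sec:finite}. Enlarging $S$ to contain the primes of bad reduction for $F$ and using $S$-integrality of $P_n$ relative to $D$, one gets $|F(P_n^\sigma)|_v \geq 1$ for all $v \notin S$ and all $\sigma$, so the product formula $\sum_v N_v \log|F(P_n^\sigma)|_v = 0$ combined with the limits from the previous step yields
\begin{equation*}
0 = \frac{1}{[k:\QQ]} \sum_{v \in M_k} N_v \int_{X_v} \log|F|_v \, d(\mu_{d,v}\otimes\mu_{d,v}).
\end{equation*}
The right-hand side should be interpreted as a dynamical arithmetic height of the curve $C$ with respect to the product system on $\AAA^2$; by the results underlying the Dynamical Andr\'e--Oort theorem for unicritical families \cite{GKNY;17}, this height is strictly positive precisely when $C$ is not one of the three excluded curves, producing the contradiction. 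The principal difficulty lies in the two-variable logarithmic equidistribution, where the singularity of the integrand along an algebraic curve and the possibly fractal support of the bifurcation measure must be controlled simultaneously---the non-archimedean case seems the more tractable, while the archimedean case may require genuinely new pluripotential-theoretic input.
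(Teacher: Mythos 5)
The statement you are attempting is labeled as a \emph{conjecture} in the paper (Conjecture~\ref{conj:a2}); the authors do not provide a proof, and indeed they describe it as going beyond even the known Dynamical Andr\'e--Oort results. So there is no ``paper's own proof'' against which to measure your attempt. The question, then, is whether your proposal amounts to a genuine plan of attack or merely a reformulation of the difficulty.

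Your outline is a sensible transposition of the Theorem~\ref{thm:pcffin} strategy to $\AAA^2$, and you correctly flag the two-variable logarithmic equidistribution as a major open step. But there is a second, equally serious gap that you pass over too quickly in the final paragraph. You assert that the global quantity
\[
\frac{1}{[k:\QQ]} \sum_{v\in M_k} N_v \int_{X_v} \log|F|_v \, d\big(\mu_{d,v}\otimes\mu_{d,v}\big)
\]
is \emph{strictly positive} precisely when the irreducible curve $C=\{F=0\}$ is not one of the three excluded types, ``by the results underlying the Dynamical Andr\'e--Oort theorem for unicritical families.'' That theorem of Ghioca--Krieger--Nguyen--Ye characterizes when a curve in $\AAA^2$ contains infinitely many PCF pairs; it does not compute or lower-bound the mutual-energy--type integral above. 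In the one-variable case (Theorem~\ref{thm:pcffin}), the analogous positivity is immediate because the global sum collapses to $\hat{h}_{d,\alpha}(\alpha) = d\,\hat{h}_{d,\alpha}(0) > 0$, an identification made possible by Lemma~\ref{lem:locht}. No such identification of your two-variable integral with a known strictly positive dynamical height is supplied, and for a general curve $C$ it is not even clear a priori that the sum is nonnegative. (As a sanity check: for $F(x,y)=x-y$ the non-archimedean contributions vanish and the archimedean contribution is the logarithmic energy of the bifurcation measure, which is $0$ because the multibrot set has capacity $1$ --- so the sum is $0$, consistent with excluding this curve, but this shows the bound is tight and must be established with care.) Proving this positivity for arbitrary non-excluded $C$ is, as far as I can see, a problem of essentially the same difficulty as the conjecture itself.

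One smaller point: the two-dimensional equidistribution of Galois orbits of a generic sequence of PCF pairs to the \emph{product} measure $\mu_{d,v}\otimes\mu_{d,v}$ does not follow merely from coordinatewise equidistribution plus Zariski genericity of the sequence. It can plausibly be derived from Yuan-type equidistribution applied to the product adelic metrized line bundle on $\PP^1\times\PP^1$, since each coordinate is a parameter of bifurcation height zero, but this needs to be carried out rather than asserted, and the Zariski-genericity of a sequence extracted from a Zariski-dense set requires a diagonal argument you should spell out. In short: your proposal identifies the right shape for a proof, but both the logarithmic-equidistribution step and the positivity step remain genuinely open, and the latter is understated in your write-up.
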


Conversely, once again in analogy with Theorem~\ref{thm:pcfinf}, if every irreducible component
of the divisor $D$ in Conjecture~\ref{conj:a2} is of one of the three forms (a)--(c),
then we expect that the set~\eqref{eq:a2PCF} \emph{is} Zariski dense in $\AAA^2$,
possibly after enlarging the finite set $S$.

All of the preceding conjectures, and not just Conjecture~\ref{conj:a2},
may be viewed as integrality variants of the Dynamical Andr\'{e}-Oort Conjecture
described in \cite{BD;11,BD;13,GKNY;17}, wherein PCF points play the role
of special points. We therefore close with the following conjecture,
in the Shimura variety setting of the original Andr\'{e}-Oort Conjecture.
For the notion of \emph{special} points or subvarieties of a Shimura variety,
we refer the reader to \cite[Section 1]{UY;14}.

\begin{conj} \label{shim} 
Let $k$ be a number field, let $S$ be a finite set of places of $k$ including all the archimedean places,
and let $X$ be a special subvariety of a Shimura variety, defined over $k$.
If $D$ is a nonzero effective divisor on $X$ at least one of whose irreducible
components is not special, then the set 
\[ 
\{ P \in X ( \kbar ) : \textup{$P$ is special and $S$-integral on $X$ relative to $D$} \}
\]
is not Zariski dense in $X$.
\end{conj}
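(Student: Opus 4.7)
The plan is to extend the strategy of Theorem~\ref{thm:pcffin} from the one-dimensional parameter space of unicritical polynomials to the higher-dimensional Shimura setting, replacing the bifurcation measure by the canonical measure $\mu_{X,v}$ on the Berkovich analytification $X_v^{\Ber}$ at each place, and replacing the Call--Silverman canonical height by an arithmetic height $\hat{h}_{\calL}$ attached to a suitable ample adelically metrized line bundle $(\calL, \|\cdot\|)$ on $X$. The three principal inputs are equidistribution of Galois orbits of special points toward $\mu_{X,v}$, an Arakelov-theoretic decomposition of $\hat{h}_{\calL}$ as a sum of local Green-function integrals, and the product formula.

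Concretely, I would fix a non-special irreducible component $D_0$ of $D$ and, using the Hodge or Baily--Borel bundle with its canonical Petersson--Arakelov metric as the prototype, choose $\calL$ whose canonical metric is compatible with $\mu_{X,v}$ at every $v$. The arithmetic intersection number $\hat{h}_{\calL}(D_0)$ then decomposes as
\[
\hat{h}_{\calL}(D_0) = \frac{1}{[k:\QQ]} \sum_{v \in M_k} N_v \int_{X_v^{\Ber}} g_{D_0,v}(x) \, d\mu_{X,v}(x),
\]
where $g_{D_0,v}$ is a local Green function for $D_0$. A Yuan--Zhang type positivity statement, exploiting that $D_0$ is not special, should give $\hat{h}_{\calL}(D_0)>0$. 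Assuming toward a contradiction that the set in Conjecture~\ref{shim} is Zariski dense, one extracts a generic sequence of special, $S$-integral points $x_n \in X(\kbar)$. By the equidistribution theorems for special points (Clozel--Ullmo, Zhang, Ullmo--Yafaev, Yuan--Zhang), the atomic measures $\nu_{n,v}$ on their Galois orbits converge weakly to $\mu_{X,v}$. The analog of Theorem~\ref{thm:logequi} would then yield convergence of $\int g_{D_0,v} \, d\nu_{n,v}$ to $\int g_{D_0,v} \, d\mu_{X,v}$, and $S$-integrality kills the contributions from $v \notin S$. Summing over all $v$ and applying the product formula to a section of $\calL$ cutting out $D_0$ forces $\hat{h}_{\calL}(D_0)=0$, the desired contradiction.

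The main obstacle is the logarithmic equidistribution against the singular function $g_{D_0,v}$. In the one-dimensional unicritical setting of Theorem~\ref{thm:logequi}, the singularity was handled at finite places by the non-concentration result \cite[Theorem~1.4]{BI;20;1} and at archimedean places by the ``totally Fatou'' hypothesis. In the Shimura setting the required analog asserts that Galois orbits of special points do not accumulate too rapidly on any non-special closed subvariety at any place -- a quantitative refinement of Andr\'{e}--Oort far beyond what is presently known, and which would appear to be the crux of any proof of Conjecture~\ref{shim}. A secondary but also nontrivial obstacle is establishing the positivity of $\hat{h}_{\calL}(D_0)$ in the required generality, which relies on Arakelov input (Yuan--Zhang) not yet available in all Shimura contexts.
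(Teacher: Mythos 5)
The statement you were asked to prove is stated in the paper as an open conjecture (Conjecture~\ref{shim}), offered as a Shimura-variety analogue of the dynamical Conjecture~\ref{conj:pcffin}; the paper contains no proof or even a proof sketch for it, so there is no argument of the authors' to compare yours against. Your sketch is a faithful transplantation of the strategy used to prove Theorem~\ref{thm:pcffin}: express a strictly positive canonical height as a sum of local integrals of a singular Green function against canonical measures, use equidistribution of special-point Galois orbits to approximate those integrals by averages over the hypothetical integral special points, and then use $S$-integrality together with the product formula to force the sum to vanish. You also correctly identify the central missing ingredient, namely a logarithmic equidistribution statement for the singular kernel --- the analogue of Theorem~\ref{thm:logequi} --- and correctly observe that the two inputs which make it work in the unicritical case (the $p$-adic non-accumulation result of \cite{BI;20;1} at finite places, and the hypothesis $\alpha\notin\partial\mathbf{M}_{d,v}$ at archimedean places) have no ready Shimura counterparts. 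That is indeed the crux.

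There are, however, two further gaps your sketch understates. First, the ``canonical measure $\mu_{X,v}$'' on the Berkovich analytification at non-archimedean $v$ is not an established object for Shimura varieties: the equidistribution theorems you cite (Clozel--Ullmo, Ullmo--Yafaev, Yuan--Zhang style) are archimedean, whereas the argument of Theorem~\ref{thm:pcffin} genuinely requires measures on $\PBerkv$ at \emph{every} place, and in the unicritical case the non-archimedean $\mu_{d,v}$ is literally a point mass at the Gauss point. Producing an analogous measure at $v\notin S$ for a general Shimura variety, and proving equidistribution toward it, is itself an open problem. Second, your positivity claim $\hat{h}_{\calL}(D_0)>0$ is not merely a matter of Arakelov technology that is ``not yet available'': in the dynamical case the positivity of $\hat{h}_{d,\alpha}(\alpha)$ comes from the fact that the Call--Silverman canonical height vanishes exactly at preperiodic points, so that non-PCF forces strict positivity. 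On a Shimura variety there is no canonical height function whose zero locus is exactly the set of special points, so the very quantity whose strict positivity your argument needs does not currently exist. Your outline is a sensible framework for attacking the conjecture, but you should make explicit that it is a research program rather than a proof, with at least three independent open ingredients: the non-archimedean canonical measure and equidistribution, the logarithmic (singular) refinement of equidistribution, and a special-point height with the required vanishing and positivity properties.
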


\medskip

\noindent
\textbf{Acknowledgments}.
The first author gratefully acknowledges the support of NSF grant DMS-150176.
The second author gratefully acknowledges the support of Simons Foundation grant 
622375 and the hospitality of the Korea Institute for Advanced Study during his visit.
The authors thank Laura DeMarco for helpful discussions.
We also express our gratitude to the mathematical legacy of Lucien Szpiro,
whose deep insights have influenced the underlying philosophy of this article;
we dedicate this paper to his memory.


\end{document}